\numberwithin{equation}{section}
\newtheorem{theorem}[equation]{Theorem} 
\newtheorem*{theorem*}{Theorem}
\newtheorem{lemma}[equation]{Lemma}
\newtheorem{proposition}[equation]{Proposition}
\newtheorem{corollary}[equation]{Corollary}
\newtheorem*{corollary*}{Corollary}
\theoremstyle{remark}
\newtheorem{notation}[equation]{Notation}
\theoremstyle{remark}
\newtheorem{remark}[equation]{Remark}
\newcommand{\cA}{{\mathcal A}}
\newcommand{\cB}{{\mathcal B}}
\newcommand{\cC}{{\mathcal C}}
\newcommand{\cD}{{\mathcal D}}
\newcommand{\cM}{{\mathcal M}}
\newcommand{\cO}{{\mathcal O}}
\newcommand{\cU}{{\mathcal U}}
\newcommand{\bbA}{\mathbb{A}}
\newcommand{\bbL}{\mathbb{L}}
\newcommand{\bbN}{\mathbb{N}}
\newcommand{\bbP}{\mathbb{P}}
\newcommand{\bbZ}{\mathbb{Z}}
\DeclareMathOperator{\id}{id}
\DeclareMathOperator{\Mot}{Mot}
\newcommand{\dgcat}{\mathsf{dgcat}}
\newcommand{\perf}{\mathsf{perf}}
\newcommand{\dg}{\mathsf{dg}}
\newcommand{\Hom}{\mathrm{Hom}}
\newcommand{\Ho}{\mathsf{Ho}}
\newcommand{\HO}{\mathsf{HO}}
\newcommand{\too}{\longrightarrow}
\newcommand{\ie}{\textsl{i.e.}\ }
\newcommand{\eg}{\textsl{e.g.}}
\begin{document}

\title[$E_n$-regularity implies $E_{n-1}$-regularity]{$E_n$-regularity implies $E_{n-1}$-regularity}

\author{Gon{\c c}alo~Tabuada}

\address{Gon{\c c}alo Tabuada, Department of Mathematics, MIT, Cambridge, MA 02139, USA}
\email{tabuada@math.mit.edu}
\urladdr{http://math.mit.edu/~tabuada/}
\thanks{The author was partially supported by the NEC Award-2742738.}
\subjclass[2000]{14A15, 16D90, 18D20, 18E30}
\date{\today}

\keywords{Algebraic $K$-theory, localizing invariants, regularity, dg categories}

\abstract{Vorst and Dayton-Weibel proved that $K_n$-regularity implies $K_{n-1}$-regularity. In this article we generalize this result from (commutative) rings to differential graded categories and from algebraic $K$-theory to any functor which is Morita invariant, continuous, and localizing. Moreover, we show that regularity is preserved under taking desuspensions, fibers of morphisms, direct factors, and arbitrary direct sums. As an application, we prove that the above implication also holds for schemes. Along the way, we extend Bass' fundamental theorem to this broader setting and establish a Nisnevich descent result which is of independent interest.}}

\maketitle 
\vskip-\baselineskip
\vskip-\baselineskip
\section{Introduction}\label{sec:introduction}
Let $n \in \bbZ$. Following Bass \cite[\S XII]{Bass}, a (commutative) ring $R$ is called {\em $K_n$-regular} if $K_n(R)\simeq K_n(R[t_1,\ldots, t_m])$ for all $m \geq 1$. The following~implication
\begin{equation}\label{eq:implication}
R \,\, \mathrm{is} \,\, K_n\textrm{-}\mathrm{regular} \Rightarrow R \,\, \mathrm{is} \,\, K_{n-1}\textrm{-}\mathrm{regular}
\end{equation}
was proved by Vorst~\cite[Cor.~2.1]{Vorst} for $n \geq 1$ and latter by Dayton-Weibel~\cite[Cor.~4.4]{Hyperplanes} for $n \leq 0$. It is then natural to ask the following:

\vspace{0.1cm}

{\it Question: Does implication \eqref{eq:implication} holds more generally ?}
\subsection*{Statement of results}
A {\em differential graded (=dg) category} $\cA$, over a base commutative ring $k$, is a category enriched over complexes of $k$-modules; see \S\ref{sec:preliminaries}. Every (dg) $k$-algebra $A$ gives naturally rise to a dg category $\underline{A}$ with a single object and (dg) $k$-algebra of endomorphisms $A$. Another source of examples is provided by $k$-schemes since, as explained in \cite[Example~5.5]{CT1}, the derived category of perfect complexes of every quasi-compact separated $k$-scheme $X$ admits a canonical dg enhancement $\perf(X)$. 

A functor $E:\dgcat \to \cM$ defined on the category of (small) dg categories and with values in a stable Quillen model category (see \cite[\S7]{Hovey}\cite{Quillen}) is called:
\begin{itemize}
\item[(i)] {\em Morita invariant} if it sends Morita equivalences (see \S\ref{sec:preliminaries}) to weak equivalences;
\item[(ii)] {\em Continuous} if it preserves filtered (homotopy) colimits;
\item[(iii)] {\em Localizing} if it sends short exact sequences of dg categories (see \cite[\S4.6]{ICM-Keller}) to distinguished triangles
\begin{eqnarray*}
0 \to \cA \to \cB \to \cC \to 0 & \mapsto & E(\cA) \to E(\cB) \to E(\cC) \stackrel{\partial}{\to} \Sigma E(\cA)
\end{eqnarray*}
in the triangulated homotopy category $\Ho(\cM)$.
\end{itemize}
Thanks to the work of Thomason-Trobaugh, Schlichting, Keller, Blumberg-Mandell and others (see \cite{BM,Exact,Exact2,Negative,MacLane,TT}), examples of functors satisfying the above conditions (i)-(iii) include (nonconnective) algebraic $K$-theory ($K$), Hochschild homology, cyclic homology (and its variants), topological Hochschild homology, {\em etc}. As proved in {\em loc.~cit.}, when applied to $\underline{A}$ (resp. to $\perf(X)$) these functors reduce to the classical invariants of (dg) $k$-algebras (resp. of $k$-schemes). Making use of the language of Grothendieck derivators, the universal functor with respect to the above conditions (i)-(iii) was constructed in \cite[\S10]{Additive}
\begin{equation}\label{eq:universal}
U:\dgcat \to \Mot\,;
\end{equation}
in {\em loc. cit.} $U$ was denoted by $\cU_l$ and $\Mot$ by $\cM^{\mathsf{loc}}_{\dg}$. Any other functor $E:\dgcat \to \cM$ satisfying the above conditions (i)-(iii) factors through $U$ via a triangulated functor $\overline{E}:\Ho(\Mot) \to \Ho(\cM)$; see Proposition~\ref{prop:factorization}. Because of this universal property, which is reminiscent from motives, $\Ho(\Mot)$ is called the triangulated category of noncommutative motives; consult the survey article \cite{survey}. Moreover, as proved in \cite[Thm.~7.6]{CT}\cite[Thm.~15.10]{Additive}, $U(\underline{k})$ is a compact object and for every dg category $\cA$ we have the isomorphisms
\begin{eqnarray}\label{eq:Homs}
\Hom_{\Ho(\Mot)}(\Sigma^nU(\underline{k}),U(\cA))\simeq K_n(\cA) && n \in \bbZ\,.
\end{eqnarray}
Given a dg category $\cA$, an integer $n$, a functor $E:\dgcat \to \cM$, and an object $b \in \Ho(\cM)$, let us write $E_n^b(\cA)$ for the abelian group $\Hom_{\Ho(\cM)}(\Sigma^n b, E(\cA))$. For instance, when $\cA=\underline{A}$, $E=U$ and $b=U(\underline{k})$, $E_n^b(\cA)$ identifies, thanks to \eqref{eq:Homs}, with the $n^{\mathrm{th}}$ algebraic $K$-theory group $K_n(A)$ of $A$. Following Bass, a dg category $\cA$ is called {\em $E_n^b$-regular} if $E^b_n(\cA)\simeq E_n^b(\cA[t_1, \ldots, t_m])$ for all $m \geq 1$, where $\cA[t_1, \ldots, t_m]:= \cA \otimes \underline{k[t_1, \ldots, t_m]}$. Our main result, which answers affirmatively the above question, is the following:
\begin{theorem}\label{thm:main}
Let $\cA$ be a dg category, $n$ an integer, $E:\dgcat \to \cM$ a functor satisfying the above conditions (i)-(iii), and $b$ a compact object of $\Ho(\cM)$. Under these notations and assumptions, the following implication holds:
\begin{equation}\label{eq:implication-main}
\cA \,\, \mathrm{is} \,\, E_n^b\textrm{-}\mathrm{regular} \Rightarrow \cA \,\,\mathrm{is} \,\, E^b_{n-1}\textrm{-}\mathrm{regular}\,.
\end{equation}
\end{theorem}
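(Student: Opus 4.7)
The strategy is to extend Bass' fundamental theorem to this setting via Nisnevich descent and then to adapt the classical Vorst--Dayton--Weibel argument.

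\textbf{Extending Bass' fundamental theorem.} I would first prove that any Morita invariant, continuous, localizing functor $E$ satisfies Nisnevich descent for the standard cover of $\bbP^1_k$ by $\Spec k[t]$ and $\Spec k[t^{-1}]$, with overlap $\bbG_m = \Spec k[t, t^{-1}]$. Tensoring the associated Thomason--Trobaugh localization sequences with $\cA$ and applying $E$ produces a homotopy cartesian square whose top-left corner is $E(\cA \otimes \perf(\bbP^1_k))$. The semi-orthogonal decomposition $\perf(\bbP^1_k) = \langle \cO, \cO(-1) \rangle$ identifies this corner with $E(\cA) \oplus E(\cA)$, and after splitting off the augmentations $t \mapsto 1$, $t^{-1} \mapsto 1$ one extracts Bass' fundamental theorem in this setting: a natural split four-term exact sequence
\[ 0 \to E^b_n(\cA) \to E^b_n(\cA[t]) \oplus E^b_n(\cA[t^{-1}]) \to E^b_n(\cA[t, t^{-1}]) \to E^b_{n-1}(\cA) \to 0 \]
valid for every compact $b \in \Ho(\cM)$ and every $n \in \bbZ$. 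Compactness of $b$ is essential, since it ensures that $\Map(b,-)$ commutes with the Mayer--Vietoris cofiber sequence.

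\textbf{Propagation via the nil-invariant.} Introduce the nil-invariant $NE^b_n(\cB) := \ker\bigl(E^b_n(\cB[s]) \to E^b_n(\cB)\bigr)$, where the map is induced by the augmentation $s \mapsto 0$. A routine induction on the number of variables shows that $\cA$ is $E^b_n$-regular iff $NE^b_n(\cA[t_1, \ldots, t_m]) = 0$ for all $m \geq 0$, and hence $E^b_n$-regularity of $\cA$ automatically propagates to every polynomial extension $\cA[t_1, \ldots, t_m]$. Taking kernels of the augmentation $s \mapsto 0$ in Bass' exact sequence yields the analogous sequence for $NE^b_n$, which under the $E^b_n$-regularity hypothesis collapses (the three polynomial-extension terms vanishing) to the key identification $NE^b_n(\cA[t, t^{-1}]) \cong NE^b_{n-1}(\cA)$.

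\textbf{Closing the induction.} I expect the main obstacle to be the following: $\cA[t, t^{-1}]$ is not a polynomial extension of $\cA$, so $NE^b_n(\cA[t, t^{-1}])$ is not directly killed by the standing $E^b_n$-regularity hypothesis. To overcome this I would apply Bass' fundamental theorem a second time, now to the $E^b_n$-regular polynomial extension $\cA[\underline{s}]$, obtaining the parallel identification $NE^b_n(\cA[\underline{s}][t, t^{-1}]) \cong NE^b_{n-1}(\cA[\underline{s}])$. A careful diagonal comparison of the two Bass sequences (for $\cA$ and for $\cA[\underline{s}]$) via the augmentation $\underline{s} \mapsto 0$, combined with the uniform vanishing of $NE^b_n$ on all polynomial extensions and the splittings provided by the augmentation $t \mapsto 1$, allows one to deduce $NE^b_{n-1}(\cA[\underline{s}]) = 0$ for every $\underline{s}$ by simultaneous induction on $|\underline{s}|$, in the spirit of the classical Vorst and Dayton--Weibel arguments. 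This yields $E^b_{n-1}$-regularity of $\cA$, completing the proof.
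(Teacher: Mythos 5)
Your first two steps do match the paper's: the generalized fundamental theorem (Theorem~\ref{thm:main22}) is proved exactly as you describe, via Zariski descent for the standard cover of $\bbP^1$ together with the split short exact sequence coming from the semi-orthogonal decomposition $\langle \cO_{\bbP^1}(0), \cO_{\bbP^1}(-1)\rangle$, and the reduction of $E^b_n$-regularity to the vanishing of $NE^b_n$ on all polynomial extensions is the paper's Notation~\ref{not:NE} and Corollary~\ref{cor:N}. You also correctly isolate the crux: under the regularity hypothesis the $N$-version of the fundamental theorem collapses to $NE^b_n(\cA[t,t^{-1}]) \cong NE^b_{n-1}(\cA)$, and $\cA[t,t^{-1}]$ is not a polynomial extension, so its $NE^b_n$ is not killed by the standing hypothesis.

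However, your proposed resolution of that crux is a genuine gap. Applying the fundamental theorem a second time to $\cA[\underline{s}]$ only yields $NE^b_n(\cA[\underline{s}][t,t^{-1}]) \cong NE^b_{n-1}(\cA[\underline{s}])$ --- the same kind of identification with the unknown group on both sides --- and no diagonal comparison of these sequences via the augmentations $\underline{s}\mapsto 0$ or $t\mapsto 1$ can produce the required vanishing, because each such sequence merely re-expresses $NE^b_n$ of a Laurent extension as $NE^b_{n-1}$ of a polynomial extension, which is precisely what is to be proved zero; the splittings only give injectivity statements, never vanishing. The missing ingredient, which is the actual content of Vorst's argument and of the paper's Section~\ref{sec:proof-main}, is the isomorphism $\mathrm{colim}\, NE^b_n(\cB[x]) \simeq NE^b_n(\cB[x,x^{-1}])$, the colimit being taken along the substitution maps $t\mapsto xt$. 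The paper proves it by identifying the colimit of the tower $k[x][t]\to k[x][t]\to\cdots$ with the ring $R=k[x]+t\,k[x,1/x][t]$, using continuity of $E$ and compactness of $b$ to commute $E^b_n$ past this filtered colimit, and then invoking Keller's localization theorem for $R\to R[\{x^n\}^{-1}]=k[x,1/x][t]$ to identify $\ker\bigl(E^b_n(\cB\otimes\underline{R})\to E^b_n(\cB[x])\bigr)$ with $NE^b_n(\cB[x,1/x])$. With this in hand, $NE^b_n(\cA[\underline{s}][x])=0$ forces $NE^b_n(\cA[\underline{s}][x,1/x])=0$ and Corollary~\ref{cor:N} finishes the proof. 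Note also that this colimit step, not the Mayer--Vietoris long exact sequence, is where compactness of $b$ and continuity of $E$ actually enter: applying $\Hom(\Sigma^n b,-)$ to a distinguished triangle gives a long exact sequence for any $b$, and indeed Theorem~\ref{thm:main22} is stated for an arbitrary object $b$ and an arbitrary Morita invariant localizing $E$.
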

Note that Theorem \ref{thm:main} uncovers in a direct and elegant way the three key conceptual properties (= Morita invariance, continuity, and localization) that underlie Vorst and Dayton-Weibel's implication \eqref{eq:implication}. Along its proof, we have generalized Bass' fundamental theorem and introduced a Nisnevich descent result; see Theorems \ref{thm:Nisnevich} and \ref{thm:main22}. These results are of independent interest. The above implication \eqref{eq:implication-main} shows us that regularity is preserved when $n$ is replaced by $n-1$. The same holds in the following five cases:
\begin{theorem}\label{thm:new}
Let $\cA, n, E, b$ be as in Theorem~\ref{thm:main}. 
\begin{itemize}
\item[(i)] Given an integer $i >0$, we have: $\cA$ is $E_n^b$-regular $\Rightarrow$ $\cA$ is $E^{\Sigma^{-i}b}_{n}$-regular.
\item[(ii)] Given a triangle $c \to c' \to c'' \to \Sigma c$ of compact objects in $\Ho(\cM)$, we have:
\begin{equation}\label{eq:implication22}
\cA \,\, \mathrm{is}\,\, E_n^{c'}\text{-}\mathrm{regular}\,\, \mathrm{and}\,\,E_n^{c''}\text{-}\mathrm{regular}\,\, \Rightarrow\,\, \cA\,\, \mathrm{is} \,\, E^c_n\text{-}\mathrm{regular}\,.
\end{equation}
\item[(iii)] Given a direct factor $d$ of $b$, we have: $\cA$ is $E_n^b$-regular $\Rightarrow$ $\cA$ is $E^d_{n}$-regular. 
\item[(iv)] Given a family of objects $\{c_i\}_{i\in I}$ in $\Ho(\cM)$, we have: $\cA$ is $E^{c_i}_n$-regular for every $i \in I$ $\Rightarrow$ $\cA$ is $E_n^{\oplus_{i\in I}c_i}$-regular.
\item[(v)] Consider the $k$-algebra $\Gamma$ of those $\bbN\times \bbN$-matrices $M$ which satisfy the following two conditions: (1) the set $\{M_{ij}\,|\,i,j \in \bbN \}$ is finite; (2) there exists a natural number $n_M$ such that each row and column has at most $n_M$ non-zero entries. Let $\sigma$ be the quotient of $\Gamma$ by the two-sided ideal consisting of those matrices with finitely many  non-zero entries. Under these notations, we have: $\cA$ is $E_n^b$-regular $\Rightarrow$ $\sigma(\cA):=\cA\otimes \underline{\sigma}$ is $E^b_{n+1}$-regular.
\end{itemize}
In items (iii)-(iv) the assumptions of Theorem~\ref{thm:main} are not necessary.
\end{theorem}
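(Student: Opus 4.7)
The plan is to handle the five items in the order (iii), (iv), (i), (ii), (v), since (iii) and (iv) are purely formal manipulations of the definition, (i) and (ii) reduce directly to Theorem~\ref{thm:main}, and (v) relies on an independent Karoubi-cone/Eilenberg-swindle computation.

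First, for items (iii) and (iv), I would unwind the definition $E_n^b(\cA) = \Hom_{\Ho(\cM)}(\Sigma^n b, E(\cA))$. A direct factor decomposition $b \simeq d \oplus d'$ yields a natural splitting $E_n^b(-) \simeq E_n^d(-) \oplus E_n^{d'}(-)$; dually, $E_n^{\oplus_{i\in I} c_i}(-) \simeq \prod_{i\in I} E_n^{c_i}(-)$. Under these identifications, the comparison morphism induced by $\cA \hookrightarrow \cA[t_1,\ldots,t_m]$ splits as a direct sum (respectively, product) of the corresponding comparison morphisms. Items (iii) and (iv) then follow because a direct sum (respectively, product) of abelian group morphisms is an isomorphism iff each summand (respectively, factor) is. Neither the three conditions on $E$ nor any compactness hypothesis is used.

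Next, for (i), I would observe that $\Sigma^n(\Sigma^{-i}b) = \Sigma^{n-i}b$, so $E_n^{\Sigma^{-i}b}$-regularity coincides with $E_{n-i}^b$-regularity; iterating Theorem~\ref{thm:main} $i$ times finishes the argument. For (ii), applying $\Hom_{\Ho(\cM)}(-, E(\cA))$ and $\Hom_{\Ho(\cM)}(-, E(\cA[t_1,\ldots,t_m]))$ to the given distinguished triangle produces two long exact sequences connected by the map induced by the polynomial inclusion. I would focus on the five-term segment
\[
E_n^{c''} \,\to\, E_n^{c'} \,\to\, E_n^c \,\to\, E_{n-1}^{c''} \,\to\, E_{n-1}^{c'}\,,
\]
in which the four outer vertical comparison maps are isomorphisms: the first two by hypothesis, and the last two by part (i) applied to the compact objects $c''$ and $c'$. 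The Five Lemma then forces the middle vertical map to be an isomorphism, proving that $\cA$ is $E_n^c$-regular.

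Finally, for (v), the plan follows the classical Karoubi-cone picture. The short exact sequence of $k$-algebras $0 \to \Gamma_f \to \Gamma \to \sigma \to 0$, where $\Gamma_f \subset \Gamma$ denotes the ideal of matrices with finitely many nonzero entries, induces, after tensoring with $\cA$ over $\underline{k}$, a short exact sequence of dg categories $0 \to \cA\otimes\underline{\Gamma_f} \to \cA\otimes\underline{\Gamma} \to \sigma(\cA) \to 0$. Applying the localizing functor $E$ gives a distinguished triangle in $\Ho(\cM)$. An Eilenberg swindle on $\Gamma$ (which absorbs countable sums of copies of itself) forces $E(\cA\otimes\underline{\Gamma}) \simeq 0$, while Morita invariance together with continuity identifies $E(\cA\otimes\underline{\Gamma_f}) \simeq E(\cA)$ via the colimit description $\Gamma_f = \bigcup_n M_n(k)$. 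The triangle thus collapses to a natural equivalence $E(\sigma(\cA)) \simeq \Sigma E(\cA)$, hence a natural isomorphism $E_{n+1}^b(\sigma(\cA)) \simeq E_n^b(\cA)$ compatible with polynomial extensions since $\sigma(\cA)[t_1,\ldots,t_m] = \sigma(\cA[t_1,\ldots,t_m])$. Transporting the assumed $E_n^b$-regularity of $\cA$ across this identification yields $E_{n+1}^b$-regularity of $\sigma(\cA)$. The step I expect to require the most care is verifying cleanly that the tensored sequence is genuinely a short exact sequence of dg categories in Keller's sense and that the Eilenberg swindle survives base change to $\cA\otimes\underline{\Gamma}$; once these are in hand, the remaining manipulations are formal.
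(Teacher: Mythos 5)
Your proposal is correct and follows essentially the same route as the paper: (i) and (iv) via the same identifications $E_n^{\Sigma^{-i}b}=E_{n-i}^b$ and $E_n^{\oplus_i c_i}=\prod_i E_n^{c_i}$, (ii) via the same five-term exact segment and the Five Lemma, (iii) via the retraction/splitting of $E_n^b$ induced by $d\to b\to d$, and (v) via the cone--suspension triangle collapsing to a natural isomorphism $E(\sigma(\cA))\simeq\Sigma E(\cA)$ compatible with $\sigma(\cA[t_1,\ldots,t_m])\simeq\sigma(\cA)[t_1,\ldots,t_m]$. The only substantive difference is that in (v) you build the short exact sequence from the nonunital ideal $\Gamma_f$ of finite matrices (the step you rightly flag as delicate, since Keller's exact sequences want unital dg categories as kernels), whereas the paper sidesteps this by quoting the already-established exact sequence $0\to\cA\otimes\underline{k}\to\cA\otimes\underline{\Gamma}\to\cA\otimes\underline{\sigma}\to 0$ and the vanishing $U(\cA\otimes\underline{\Gamma})\simeq 0$ from the reference on universal suspension.
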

Roughly speaking, item (v) shows us that the converse of implication \eqref{eq:implication-main} also holds as long as on the right-hand-side one tensors $\cA$ with $\sigma$. Let us denote by $\langle \Sigma^nb |^{\natural,\oplus}$ the smallest subcategory of $\Ho(\cM)$ which contains the object $\Sigma^nb$ and which is stable under taking desuspensions, fibers of morphisms, direct factors, and arbitrary direct sums. Thanks to the above items (i)-(iv) we have: 
\begin{eqnarray}\label{eq:implication-last}
\cA \,\,\mathrm{is}\,\,E_n^{b}\text{-}\mathrm{regular} \Rightarrow \cA \,\,\mathrm{is}\,\, E^c_n\text{-}\mathrm{regular} && \forall\, c \in \langle \Sigma^n b|^{\natural,\oplus}\,.
\end{eqnarray}
Moreover, in the particular case where $\cA$ is $E_n^b$-regular for every $n \in \bbZ$ one can replace $\langle \Sigma^n b|^{\natural,\oplus}$ in the above implication \eqref{eq:implication-last} by the smallest thick localizing (=stable under arbitrary direct sums) triangulated subcategory $\langle b \rangle^{\natural,\oplus}$ of $\Ho(\cM)$ which contains $b$. Note that when $E=U$ and $b=U(\underline{k})$, \eqref{eq:implication-last} reduces to 
\begin{eqnarray}\label{eq:implication-verynew}
\cA \,\,\mathrm{is}\,\,K_n\text{-}\mathrm{regular} \Rightarrow \cA \,\,\mathrm{is}\,\, U^c_n\text{-}\mathrm{regular} && \forall\, c \in \langle \Sigma^n U(\underline{k})|^{\natural,\oplus}
\end{eqnarray}
and that in the particular case where $\cA$ is $K_n$-regular for every $n \in \bbZ$  (\eg\ $\cA=\underline{A}$ with $A$ a noetherian regular $k$-algebra) one can replace $\langle \Sigma^n U(\underline{k})|^{\natural,\oplus}$ by the triangulated category $\langle U(\underline{k})\rangle^{\natural,\oplus}$. Here is one example of the above implication \eqref{eq:implication-verynew}:
\begin{proposition}\label{prop:new}
Consider the following distinguished triangle in $\Ho(\Mot)$
\begin{equation*}
\mathrm{fib}(l) \too U(\underline{k}) \stackrel{\cdot l}{\too} U(\underline{k}) \too \Sigma \mathrm{fib}(l)\,,
\end{equation*}
where $l \geq 2$ is an integer and $\cdot l$ stands for the $l$-fold multiple of the identity morphism. Under these notations, $U_n^{\mathrm{fib}(l)}(\cA)$ identifies with Browder-Karoubi \cite{Browder} mod-$l$ algebraic $K$-theory $K_n(\cA;\bbZ/l)$. Consequently, the above implication \eqref{eq:implication-verynew} with $c:=\mathrm{fib}(l)$ reduces to: $\cA$ is $K_n$-regular $\Rightarrow$ $\cA$ is $K_n(-;\bbZ/l)$-regular.
\end{proposition}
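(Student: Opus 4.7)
The plan is to verify the identification $U_n^{\mathrm{fib}(l)}(\cA) \cong K_n(\cA;\bbZ/l)$; the claimed implication then follows by direct application of the framework developed in Theorem~\ref{thm:new}.

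For the identification, I would apply the cohomological functor $\Hom_{\Ho(\Mot)}(\Sigma^{\bullet}(-), U(\cA))$ to the distinguished triangle $\mathrm{fib}(l) \to U(\underline{k}) \stackrel{\cdot l}{\to} U(\underline{k}) \to \Sigma \mathrm{fib}(l)$. Invoking the isomorphisms \eqref{eq:Homs} to identify $\Hom_{\Ho(\Mot)}(\Sigma^n U(\underline{k}), U(\cA))$ with $K_n(\cA)$, and observing that the $l$-fold multiple of the identity induces multiplication by $l$ on these groups, the resulting long exact sequence reads
\begin{equation*}
\cdots \to K_n(\cA) \stackrel{\cdot l}{\to} K_n(\cA) \to U_n^{\mathrm{fib}(l)}(\cA) \to K_{n-1}(\cA) \stackrel{\cdot l}{\to} K_{n-1}(\cA) \to \cdots
\end{equation*}
By Browder-Karoubi's original definition~\cite{Browder}, $K_n(\cA;\bbZ/l)$ is constructed precisely to fit into the identical long exact sequence (being the homotopy groups of the cofiber of multiplication by $l$ on $K(\cA)$), which yields a natural isomorphism $U_n^{\mathrm{fib}(l)}(\cA) \cong K_n(\cA;\bbZ/l)$.

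For the conclusion, the cleanest route is to apply Theorem~\ref{thm:new}(ii) directly to the distinguished triangle $\mathrm{fib}(l) \to U(\underline{k}) \to U(\underline{k}) \to \Sigma \mathrm{fib}(l)$; its compactness hypothesis is satisfied since $U(\underline{k})$ is compact in $\Ho(\Mot)$ and compact objects are closed under fibers. Because $c' = c'' = U(\underline{k})$ in this triangle, the hypothesis of Theorem~\ref{thm:new}(ii) reduces to $K_n$-regularity of $\cA$, and the conclusion is $U_n^{\mathrm{fib}(l)}$-regularity, \ie $K_n(-;\bbZ/l)$-regularity. Equivalently, when $n\geq 0$, one observes that $\mathrm{fib}(l)\in\langle \Sigma^n U(\underline{k})|^{\natural,\oplus}$---the fiber of $\cdot l:\Sigma^n U(\underline{k}) \to \Sigma^n U(\underline{k})$ is $\Sigma^n\mathrm{fib}(l)$, and $n$ successive desuspensions produce $\mathrm{fib}(l)$---so implication~\eqref{eq:implication-verynew} applies directly.

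The main subtlety is confirming that the boundary homomorphism produced by applying the cohomological functor to our distinguished triangle matches, up to universal sign, the one in Browder-Karoubi's construction. This is a standard bookkeeping exercise rather than a genuine obstacle; once it is in place, the final implication is automatic from the machinery of Theorems~\ref{thm:main} and~\ref{thm:new}.
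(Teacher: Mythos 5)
Your overall strategy --- identify $U_n^{\mathrm{fib}(l)}(\cA)$ with $K_n(\cA;\bbZ/l)$ and then feed the triangle into the regularity machinery --- is the same as the paper's, and your second half is fine: applying Theorem~\ref{thm:new}(ii) with $c'=c''=U(\underline{k})$ (noting that $\mathrm{fib}(l)$ is compact, being the fiber of a morphism of compact objects) is exactly how implication \eqref{eq:implication-verynew} is meant to be used here.

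The gap is in the identification itself. From the long exact sequence
\begin{equation*}
\cdots \to K_n(\cA) \stackrel{\cdot l}{\to} K_n(\cA) \to U_n^{\mathrm{fib}(l)}(\cA) \to K_{n-1}(\cA) \stackrel{\cdot l}{\to} K_{n-1}(\cA) \to \cdots
\end{equation*}
you can only extract a short exact sequence $0 \to K_n(\cA)/l \to U_n^{\mathrm{fib}(l)}(\cA) \to {}_l K_{n-1}(\cA) \to 0$, and likewise for $K_n(\cA;\bbZ/l)$. Two abelian groups that are extensions of the same pair of groups need not be isomorphic ($\bbZ/l^2$ versus $\bbZ/l\oplus\bbZ/l$), so ``fits into the identical long exact sequence'' does not yield an isomorphism, natural or otherwise. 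The obstacle is not a sign convention on the boundary map but the absence of any comparison map between the two sequences to which the five lemma could be applied. To produce one you need more than the group-level isomorphisms \eqref{eq:Homs}: you need the spectrum-level corepresentability of algebraic $K$-theory by $U(\underline{k})$ (this is the content of \cite[Thm.~7.6]{CT}), so that the spectra-valued $\Map(-,U(\cA))$ applied to the triangle gives a fiber sequence of spectra identifying $\Map(\mathrm{fib}(l),U(\cA))$ with the appropriately shifted cofiber of $\cdot l$ on the spectrum $K(\cA)$, whose homotopy groups are by definition Browder--Karoubi's $K_\ast(\cA;\bbZ/l)$. This is exactly what the paper outsources: it cites \cite[Prop.~2.12]{products} for $\Hom_{\Ho(\Mot)}(\Sigma^n(U(\underline{k})/l),U(\cA))\simeq K_{n+1}(\cA;\bbZ/l)$ and then concludes via $U(\underline{k})/l\simeq\Sigma\,\mathrm{fib}(l)$. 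Either cite that result or carry out the spectral argument; the purely group-theoretic comparison you propose does not close.
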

\begin{remark}\label{rk:new2}
In the particular case where $\cA$ is a $k$-algebra $\underline{A}$ such that $1/l \in A$, Weibel proved in \cite{Weibel2,Weibel3,Weibel6} that $A$ is $K_n(-;\bbZ/l)$-regular for every $n \in \bbZ$.
\end{remark}
Intuitively speaking, Proposition~\ref{prop:new} shows us that mod-$l$ algebraic $K$-theory is the simplest replacement of algebraic $K$-theory (using fibers of morphisms) for which regularity is preserved. Many other replacements, preserving regularity, can be obtained by combining the above implication \eqref{eq:implication-verynew} with the description \eqref{eq:Homs} of the Hom-sets of the category of noncommutative motives.

Following Bass, a (quasi-compact separated) $k$-scheme $X$ is called {\em $K_n$-regular} if $K_n(X) \simeq K_n(X\times \bbA^m)$ for all $m \geq 1$, where $\bbA^1$ stands for the affine line. As mentioned above, all the invariants of $X$ can be recovered from its derived dg category of perfect complexes $\perf(X)$. Hence, let us define $E_n^b(X)$ to be the abelian group $E_n^b(\perf(X))$ and call a $k$-scheme $X$ {\em $E^b_n$-regular} if $E^b_n(X) \simeq E^b_n(X\times \bbA^m)$ for all $m \geq 1$. Making use of Theorems~\ref{thm:main} and \ref{thm:new} and of Proposition \ref{prop:new} one then obtains the following result:
\begin{theorem}\label{thm:main2}
Let $X$ be a quasi-compact separated $k$-scheme, $n$ an integer, $E:\dgcat \to \cM$ a functor satisfying the above conditions (i)-(iii), and $b$ a compact object of $\Ho(\cM)$. Under these notations and assumptions, the following implications hold:
\begin{eqnarray}\label{eq:implication2}
X \,\, \mathrm{is} \,\, E_n^b\textrm{-}\mathrm{regular} \Rightarrow X \,\,\mathrm{is} \,\, E^b_{n-1}\textrm{-}\mathrm{regular}
\end{eqnarray}
\begin{eqnarray}\label{eq:implication3}
X \,\, \mathrm{is} \,\, E_n^b\textrm{-}\mathrm{regular} \Rightarrow X \,\,\mathrm{is} \,\, E^c_{n}\textrm{-}\mathrm{regular} && \forall\, c \in \langle \Sigma^nb|^{\natural,\oplus}
\end{eqnarray}
\begin{eqnarray}\label{eq:implication4}
X \,\, \mathrm{is} \,\, K_n(-;\bbZ/l^\nu)\textrm{-}\mathrm{regular} \Rightarrow X \,\,\mathrm{is} \,\, K_{n-1}(-;\bbZ/l^\nu)\textrm{-}\mathrm{regular}\,,
\end{eqnarray}
where in \eqref{eq:implication4} $l^\nu$ is a prime power; see Thomason-Trobaugh \cite[\S9.3]{TT}.
\end{theorem}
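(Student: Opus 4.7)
The plan is to reduce each of the three implications to its dg-categorical counterpart, already established as Theorem~\ref{thm:main}, Theorem~\ref{thm:new}, and Proposition~\ref{prop:new}, by passing through the canonical dg enhancement $\perf(X)$. The bridge one needs is an identification $\perf(X\times\bbA^m)\simeq\perf(X)\otimes\underline{k[t_1,\ldots,t_m]}=\perf(X)[t_1,\ldots,t_m]$ at the level of Morita equivalence classes of dg categories. Since $\bbA^m=\Spec k[t_1,\ldots,t_m]$ is affine and flat over $k$, the dg category $\perf(\bbA^m)$ is Morita equivalent to $\underline{k[t_1,\ldots,t_m]}$, and a standard base change argument, using that $X$ is quasi-compact and separated, yields $\perf(X\times\bbA^m)\simeq\perf(X)\otimes\perf(\bbA^m)$. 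Invoking Morita invariance of $E$, one concludes that the scheme $X$ is $E_n^b$-regular precisely when the dg category $\perf(X)$ is $E_n^b$-regular.

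With this reduction in place, implication~\eqref{eq:implication2} follows immediately by applying Theorem~\ref{thm:main} with $\cA=\perf(X)$. Implication~\eqref{eq:implication3} follows in turn by applying to $\cA=\perf(X)$ the chain~\eqref{eq:implication-last}, itself obtained by combining Theorem~\ref{thm:main} with items~(i)-(iv) of Theorem~\ref{thm:new}. For implication~\eqref{eq:implication4}, first extend Proposition~\ref{prop:new} from mod-$l$ to mod-$l^\nu$ coefficients: in the distinguished triangle
\[
\mathrm{fib}(l^\nu)\too U(\underline{k})\stackrel{\cdot l^\nu}{\too} U(\underline{k})\too\Sigma\,\mathrm{fib}(l^\nu)
\]
of $\Ho(\Mot)$, the group $U_n^{\mathrm{fib}(l^\nu)}(\cA)$ identifies, by Thomason-Trobaugh~\cite[\S9.3]{TT}, with the mod-$l^\nu$ algebraic $K$-theory group $K_n(\cA;\bbZ/l^\nu)$. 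Since $U(\underline{k})$ is compact in $\Ho(\Mot)$, so is $\mathrm{fib}(l^\nu)$; hence Theorem~\ref{thm:main} applied with $E=U$, $b=\mathrm{fib}(l^\nu)$, and $\cA=\perf(X)$ yields~\eqref{eq:implication4}.

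The main obstacle to make fully rigorous is the Morita-level identification $\perf(X\times\bbA^m)\simeq\perf(X)\otimes\underline{k[t_1,\ldots,t_m]}$, since the dg enhancement of a derived fiber product need not in general coincide with the tensor product of the dg enhancements. Here, however, the quasi-compactness and separatedness of $X$ together with the affineness and flatness of $\bbA^m$ over $k$ mean that no derived correction is required and that perfect complexes on $X\times\bbA^m$ are generated, in the dg sense, by external tensor products. The remaining verifications, namely Morita invariance of $E_n^b$ and compactness of $\mathrm{fib}(l^\nu)$, are immediate from assumption~(i) and from the triangulated structure of $\Ho(\Mot)$.
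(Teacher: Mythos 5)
Your overall strategy coincides with the paper's: transport everything along $\perf(-)$ and invoke Theorem~\ref{thm:main}, the chain \eqref{eq:implication-last}, and a mod-$l^\nu$ variant of Proposition~\ref{prop:new}. The treatment of \eqref{eq:implication3} and \eqref{eq:implication4} is essentially the paper's (the paper quotes \cite[Example~2.13]{products} for the identification $U_n^{\mathrm{fib}(l^\nu)}(-)\simeq K_n(-;\bbZ/l^\nu)$, with Thomason--Trobaugh serving only as the reference for the definition of mod-$l^\nu$ $K$-theory of schemes; this is cosmetic).

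The gap lies exactly where you flag ``the main obstacle.'' You assert a Morita equivalence of dg categories $\perf(X\times\bbA^m)\simeq\perf(X)\otimes\underline{k[t_1,\ldots,t_m]}$ and justify it by ``a standard base change argument'' plus the claim that perfect complexes on $X\times\bbA^m$ are generated, in the dg sense, by external tensor products. That generation claim is precisely the nontrivial content: it rests on compact generation of the derived category of a quasi-compact separated scheme (Bondal--Van den Bergh) and is in substance To\"en's derived Morita theorem; it does not follow formally from affineness and flatness of $\bbA^m$ over $k$. The paper deliberately avoids proving any dg-level equivalence. Instead, Proposition~\ref{prop:schemes-monoidal} establishes only the weaker isomorphism $E_n^b(\perf(X)\otimes^\bbL\perf(Y))\simeq E_n^b(\perf(X\times Y))$ for $Y$ $k$-flat, and its proof requires the Nisnevich descent Theorem~\ref{thm:Nisnevich} (applied in each variable separately) together with the reduction principle of \cite[Prop.~3.3.1]{BB} to reduce to the affine case, where the comparison map is manifestly an isomorphism; this is the raison d'\^etre of Section~\ref{sec:Nisnevich}. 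To complete your argument you must either substantiate the stronger Morita-level claim with an actual reference or proof, or downgrade to the $E$-level statement and supply the descent-plus-reduction argument as the paper does.
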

\begin{remark}
As in the above Remark~\ref{rk:new2}, Weibel proved that in the particular case where $1/l \in \cO_X$ the $k$-scheme $X$ is $K_n(-;\bbZ/l^\nu)$-regular for every $n \in \bbZ$.
\end{remark}
When $E=U$ and $b=U(\underline{k})$, \eqref{eq:implication2} reduces to $K_n$-regularity $\Rightarrow$ $K_{n-1}$-regularity. Chuck Weibel kindly informed the author that this latter implication was proved (in a totally different way) by Corti{\~n}as-Haesemeyer-Walker-Weibel \cite[Cor.~4.4]{Cortinas2} in the particular case where $k$ is a field of characteristic zero. To the best of the author's knowledge all the remaining cases (with $k$ an arbitrary commutative ring) are new in the literature. On the other hand, \eqref{eq:implication3} reduces to the implication
\begin{eqnarray*}
X \,\, \mathrm{is} \,\, K_n\textrm{-}\mathrm{regular} \Rightarrow X \,\,\mathrm{is} \,\, U^c_{n}\textrm{-}\mathrm{regular} && \forall\, c \in \langle \Sigma^nU(\underline{k})|^{\natural,\oplus}\,.
\end{eqnarray*}
Moreover, in the particular case where $X$ is $K_n$-regular for every $n \in \bbZ$ (\eg\ $X$ a regular $k$-scheme) one can replace $\langle \Sigma^nU(\underline{k})|^{\natural,\oplus}$ by the triangulated category $\langle U(\underline{k})\rangle^{\natural,\oplus}$. Finally, to the best of the author's knowledge, implication \eqref{eq:implication4} is also new in the literature.

\begin{remark}
Theorem~\ref{thm:main} admits a ``cohomological'' analogue. Given a dg category $\cA$, an integer $n$, a functor $E:\dgcat \to \cM$, and an object $b \in \Ho(\cM)$, let us write $E^{-n}_b(\cA)$ for the abelian group $\Hom_{\Ho(\cM)}(E(\cA),\Sigma^n b)$. The dg category $\cA$ is called {\em $E_b^{-n}$-regular} if $E^{-n}_b(\cA) \simeq E^{-n}_b(\cA[t_1, \ldots, t_m])$ for all $m \geq 1$. Under these notations, the following implication 
\begin{equation}\label{eq:implication-new}
\cA \,\, \mathrm{is} \,\, E^{-n}_b\textrm{-}\mathrm{regular} \Rightarrow \cA \,\,\mathrm{is} \,\, E_b^{-n+1}\textrm{-}\mathrm{regular}
\end{equation}
holds for every functor $E$ which satisfies the above conditions (i)-(iii). Moreover, and in contrast with implication \eqref{eq:implication-main}, it is {\em not} necessary to assume that $b$ is a compact object of $\Ho(\cM)$. The proof of \eqref{eq:implication-new} is similar to the proof of \eqref{eq:implication-main}. First  replace $NE_n^b(\cA)$ by the cokernel $CE^{-n}_b(\cA)$ of the group homomorphism 
$$E^{-n}_b(\id \otimes(t=0)): E^{-n}_b(\cA) \too E_b^{-n}(\cA[t])\,,$$
then replace \eqref{eq:searched} by the group isomorphism $\mathrm{lim}\,CE_b^{-n}(\cB[x])\simeq CE_b^{-n}(\cB[x,x^{-1}])$, and finally use the new key fact that the contravariant functor $\Hom_{\Ho(\cM)}(-,\Sigma^nb)$ sends colimits to limits. 

Theorem \ref{thm:main2} also admits a ``cohomological'' analogue. In items (i)-(iv) replace $E^?_n$ by $E^{-n}_?$ and in item (v) replace the above implication by: $\cA$ is $E^{-n}_b$-regular $\Rightarrow$ $\sigma(\cA)$ is $E_b^{-n-1}$-regular. As a consequence we obtain:
\begin{eqnarray*}
\cA \,\, \mathrm{is} \,\, E_b^{-n}\textrm{-}\mathrm{regular} \Rightarrow \cA \,\,\mathrm{is} \,\, E^{-n}_c\textrm{-}\mathrm{regular} && \forall\, c \in \langle \Sigma^nb|^{\natural,\oplus}\,.
\end{eqnarray*}
In the particular case where $\cA$ is $E_b^{-n}$-regular for every $n \in \bbZ$ we can furthermore replace $\langle \Sigma^nb|^{\natural,\oplus}$ by the thick localizing triangulated category $\langle b \rangle^{\natural, \oplus}$. 
\end{remark}

\medbreak\noindent\textbf{Acknowledgments:} The author is very grateful to Denis-Charles Cisinski, Lars Hesselholt and Chuck Weibel for useful e-mail exchanges, as well as to the anonymous referee for all his comments that greatly allowed the improvement of the article.
\section{Preliminaries}\label{sec:preliminaries}
\subsection*{Dg categories}
Let $k$ be a base commutative ring and $\cC(k)$ the category of complexes of $k$-modules. A {\em differential graded (=dg) category} $\cA$ is a category enriched over $\cC(k)$ (morphism sets $\cA(x,y)$ are complexes) in such a way that composition fulfills the Leibniz rule: $d(f \circ g) =d(f) \circ g +(-1)^{\mathrm{deg}(f)}f \circ d(g)$. A {\em dg functor} $\cA \to \cB$ is a functor enriched over $\cC(k)$; consult Keller's ICM survey \cite{ICM-Keller}. In what follows we will write $\dgcat$ for the category of (small) dg categories and dg functors. 

A dg functor $\cA \to \cB$ is called a {\em Morita equivalence} if the restriction functor induces an equivalence $\cD(\cB) \stackrel{\sim}{\to}\cD(\cA)$ on derived categories; see \cite[\S3]{ICM-Keller}. The localization of $\dgcat$ with respect to the class of Morita equivalences will be denoted by $\Ho(\dgcat)$. Note that every Morita invariant functor $E:\dgcat \to \cM$ descends uniquely to $\Ho(\dgcat)$.

The tensor product of $k$-algebras extends naturally to dg categories, giving rise to a symmetric monoidal structure $-\otimes-$ on $\dgcat$ with $\otimes$-unit the dg category $\underline{k}$. As explained in \cite[\S4.2]{ICM-Keller}, this tensor product descends to a derived tensor product $-\otimes^\bbL-$ on $\Ho(\dgcat)$. Finally, recall that a dg category $\cA$ is called {\em $k$-flat} if for any two objects $x$ and $y$ the functor $\cA(x,y)\otimes-:\cC(k) \to \cC(k)$ preserves quasi-isomorphisms. In this particular case the derived tensor product $\cA\otimes^\bbL \cB$ agrees with the classical one $\cA\otimes\cB$.
\subsection*{Schemes}
Throughout this article all schemes will be quasi-compact and separated. By a $k$-scheme $X$ we mean a scheme $X$ over $\mathrm{spec}(k)$. Given a dg category $\cA$ and a $k$-scheme $X$, we will often write $\cA\otimes^\bbL X$ instead of $\cA \otimes^\bbL \perf(X)$. When $X=\mathrm{spec}(C)$ is affine we will furthermore replace $\cA \otimes^\bbL \mathrm{spec}(C)$ by $\cA\otimes^\bbL \underline{C}$. 
\subsection*{Noncommutative motives}
\begin{proposition}\label{prop:factorization}
Given a functor $E:\dgcat \to \cM$ which satisfies the above conditions (i)-(iii), there exists a triangulated functor $\overline{E}:\Ho(\Mot) \to \Ho(\cM)$ such that $\overline{E}\circ U=E$.
\end{proposition}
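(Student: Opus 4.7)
The plan is to invoke the universal property of $U$ established in \cite[\S10]{Additive}, where the target $\Mot$ was constructed not merely as a triangulated category but as a Grothendieck derivator, and the universality of $U=\cU_l$ was formulated at that enriched level. The strategy is: prolong $E$ to a morphism of derivators, apply the universal property there, and then evaluate at the base category.

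First, I would pass from $E:\dgcat \to \cM$ to its canonical prolongation to a morphism of derivators $\HO(E):\HO(\dgcat) \to \HO(\cM)$. Here $\HO(\cM)$ is the triangulated derivator associated to the stable Quillen model category $\cM$, and $\HO(\dgcat)$ is the derivator associated to the Morita model structure on $\dgcat$ (see \cite[\S5]{Additive}). The prolongation exists because $E$ is continuous, which lets one define its value on any small diagram by the usual left Kan extension formula and check compatibility with filtered homotopy colimits.

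Next, I would verify that $\HO(E)$ satisfies the three derivator-level conditions singled out in \cite[\S10]{Additive}: Morita invariance allows $\HO(E)$ to descend to the Morita localization; continuity is built into the prolongation; and the localizing condition ensures that short exact sequences of dg categories are sent to homotopy cofiber sequences in every component of $\HO(\cM)$, not just in $\Ho(\cM)$. By the universal property of $U$ proved in \loccit, there exists a unique morphism of triangulated derivators $\overline{E}:\Mot \to \HO(\cM)$ such that $\overline{E}\circ U = \HO(E)$. Evaluating at the terminal index yields the desired triangulated functor $\overline{E}:\Ho(\Mot) \to \Ho(\cM)$ with $\overline{E}\circ U = E$.

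The main obstacle is the gap between the homotopy-categorical formulation of conditions (i)-(iii) in the statement and the derivator-level hypotheses required by the universal property of \cite[\S10]{Additive}. Morita invariance and continuity transfer essentially automatically, but the localizing condition is a priori weaker than its derivator version, since a triangle in each component $\HO(\cM)(I)$ is stronger than a triangle in $\Ho(\cM)$ alone. This is handled by observing that the prolongation $\HO(E)$ is built out of the underlying functor $E$ by pointwise homotopy colimits, so the componentwise localizing property is inherited from the base component, exactly as in the proof of the analogous statement for the additive motivator in \cite[\S10]{Additive}; no genuinely new argument is needed beyond careful bookkeeping.
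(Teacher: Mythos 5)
Your proposal follows essentially the same route as the paper: pass to the derivator $\HO(\dgcat)$ associated to the Morita model structure, observe that $E$ induces a localizing invariant $\HO(E):\HO(\dgcat)\to\HO(\cM)$, invoke the universal property of \cite[Thm.~10.5]{Additive} to factor it through $\HO(\Mot)$, and evaluate at the base to obtain $\overline{E}$. Your closing remark on bridging the homotopy-categorical and derivator-level formulations of the localizing condition is in fact spelled out more carefully than in the paper, which simply asserts that conditions (i)--(iii) yield a localizing invariant in the sense of \cite[Notation~15.5]{Additive}.
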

\begin{proof}
The category $\dgcat$ carries a (cofibrantly generated) Quillen model category whose weak equivalences are precisely the Morita equivalences; see \cite[Thm.~5.3]{IMRN}. Hence, it gives rise to a well-defined Grothendieck derivator $\HO(\dgcat)$; consult \cite[Appendix~A]{CT1} for the notion of derivator. Since by hypothesis $\cM$ is stable and the functor $E$ satisfies conditions (i)-(iii), we then obtain a well-defined localizing invariant of dg categories $\HO(E):\HO(\dgcat) \to \HO(\cM)$ in the sense of \cite[Notation~15.5]{Additive}. Thanks to the universal property of \cite[Thm.~10.5]{Additive} this localizing invariant of dg categories factors (uniquely) through $\HO(\Mot)$ via an homotopy colimit preserving morphism of derivators $\HO(\Mot) \to \HO(\cM)$. By passing to the underlying homotopy categories of this latter morphism of derivators we hence obtain the searched triangulated functor $\overline{E}:\Ho(\Mot) \to \Ho(\cM)$ which verifies $\overline{E}\circ U =E$.
\end{proof}
\section{Nisnevich descent}\label{sec:Nisnevich}
In this section we prove the following Nisnevich descent result, which is of independent interest. Its Corollary~\ref{cor:Zariski} will play a key role in the next section.
\begin{theorem}{(Nisnevich descent)}\label{thm:Nisnevich}
Consider the following (distinguished) square of $k$-schemes
\begin{equation}\label{eq:distinguished}
\xymatrix@C=2em@R=2em{
U \times_X V \ar[d] \ar[r] & V \ar[d]^-p \\
U \ar[r]_-j & X \,,
}
\end{equation}
where $j$ is an open immersion and $p$ is an {\'e}tale morphism inducing an isomorphism of reduced $k$-schemes $p^{-1}(X-U)_{\mathrm{red}} \simeq (X -U)_{\mathrm{red}}$. Then, given a dg category $\cA$ and a Morita invariant localizing functor $E:\dgcat \to \cM$, one obtains a homotopy (co)cartesian square
\begin{equation}\label{eq:square23}
\xymatrix@C=2em@R=2em{
E(\cA\otimes^\bbL X) \ar[rr]^-{E(\id \otimes^\bbL j^\ast)} \ar[d]_-{E(\id \otimes^\bbL p^\ast)} \ar@{}[drr]|{\square} && E(\cA \otimes^\bbL U) \ar[d] \\
E(\cA \otimes^\bbL V) \ar[rr] && E(\cA \otimes^\bbL (U \times_X V))
}
\end{equation}
in the homotopy category $\Ho(\cM)$; see \cite[Def.~1.4.1]{Neeman}.
\end{theorem}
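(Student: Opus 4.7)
The plan is to deduce Nisnevich descent for $E \circ (\cA \otimes^{\bbL} -)$ from the Thomason--Trobaugh localization theorem (in its dg enhancement) together with the excision property of perfect complexes with support that characterizes a Nisnevich square. Concretely, let $Z := (X - U)_{\mathrm{red}}$ and let $\perf_Z(X)$ denote the dg category of perfect complexes on $X$ acyclic off $Z$; similarly define $\perf_{p^{-1}(Z)}(V)$. First, I would invoke the dg-enhanced Thomason--Trobaugh localization (see \cite[\S5]{CT1}\cite[\S4.6]{ICM-Keller}) to obtain two short exact sequences of dg categories
\begin{equation*}
0 \to \perf_Z(X) \to \perf(X) \to \perf(U) \to 0, \qquad 0 \to \perf_{p^{-1}(Z)}(V) \to \perf(V) \to \perf(U \times_X V) \to 0,
\end{equation*}
connected by the pullback dg functor $p^\ast$.

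Next I would tensor everything with $\cA$. Since $-\otimes^\bbL-$ is symmetric monoidal on $\Ho(\dgcat)$ and short exact sequences of dg categories are preserved by derived tensor product (they correspond to Drinfeld dg quotients, which are stable under $-\otimes^\bbL \cA$), the two sequences above yield short exact sequences of dg categories after applying $\cA \otimes^\bbL -$. Applying the Morita invariant localizing functor $E$ then produces a commutative ladder of two distinguished triangles in $\Ho(\cM)$, in which the right-hand $2\times 2$ square is exactly \eqref{eq:square23} and the leftmost vertical map is $E(\id \otimes^\bbL p^\ast) : E(\cA \otimes^\bbL \perf_Z(X)) \to E(\cA \otimes^\bbL \perf_{p^{-1}(Z)}(V))$.

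At this point the conclusion is a standard triangulated-category fact: in a morphism of distinguished triangles in which the leftmost arrow is an isomorphism, the remaining square is homotopy (co)cartesian (see \cite[Def.~1.4.1]{Neeman}). So what remains is to check that the leftmost vertical map is an equivalence. For this it suffices, by Morita invariance and by the fact that tensoring with $\cA$ preserves Morita equivalences on the derived level, to show that $p^\ast : \perf_Z(X) \to \perf_{p^{-1}(Z)}(V)$ is itself a Morita equivalence.

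The main obstacle is precisely this excision statement, which is where the Nisnevich hypothesis $p^{-1}(X-U)_{\mathrm{red}} \simeq (X-U)_{\mathrm{red}}$ is used. It is a classical theorem of Thomason (see Thomason--Trobaugh \cite[\S3.15, \S4.1]{TT}) that for an étale morphism $p$ inducing an isomorphism on the reduced complement, the pullback induces an equivalence on the triangulated categories of perfect complexes with support on $Z$ (this is the model for Nisnevich excision in $K$-theory). Lifting that statement to the dg level, $p^\ast$ is a Morita equivalence of dg categories, and the proof is complete. If a cleaner reference in the dg setting is needed, one can alternatively proceed by identifying both $\perf_Z(X)$ and $\perf_{p^{-1}(Z)}(V)$ with the dg category of perfect modules over the ``formal completion'' along $Z$ (which depends only on the reduced closed subscheme), reducing everything to the universal property of Drinfeld quotients.
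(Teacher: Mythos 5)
Your proposal is correct and follows essentially the same route as the paper's own proof: both use the Thomason--Trobaugh localization sequences for $\perf(X)_Z$ and $\perf(V)_{Z'}$, the excision result that $p^\ast$ induces a Morita equivalence on the supported subcategories (the paper cites \cite[Thm.~2.6.3]{TT}), preservation of short exact sequences under $\cA\otimes^\bbL-$ via Drinfeld's dg quotients, and the standard fact that a morphism of distinguished triangles with invertible outer maps yields a homotopy (co)cartesian middle square.
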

\begin{proof}
Consider the following commutative diagram in $\Ho(\dgcat)$
\begin{equation*}
\xymatrix@C=2em@R=2em{
0 \ar[r] & \perf(X)_Z \ar[d]_-\sim \ar[r] & \perf(X) \ar[d]_-{p^\ast} \ar[r]^-{j ^\ast} & \perf(U) \ar[d] \ar[r] & 0  \\
0 \ar[r] & \perf(V)_{Z'} \ar[r] &\perf(V) \ar[r] & \perf(U \times_X V) \ar[r] & 0\,,
}
\end{equation*}
where $Z$ (resp. $Z'$) is the closed set $X-U$ (resp. $p^{-1}(X-U)$) and $\perf(X)_Z$ (resp. $\perf(V)_{Z'}$) the dg category of those perfect complexes of $\cO_X$-modules (resp. of $\cO_V$-modules) that are supported on $Z$ (resp. on $Z'$). As explained by Thomason-Trobaugh in \cite[\S5]{TT}, both rows are short exact sequences of dg categories; see also \cite[\S4.6]{ICM-Keller}. Furthermore, as proved in \cite[Thm.~2.6.3]{TT}, the induced dg functor $\perf(X)_Z \stackrel{\sim}{\to} \perf(V)_{Z'}$ is a Morita equivalence and hence an isomorphism in $\Ho(\dgcat)$. Following Drinfeld \cite[Prop.~1.6.3]{Drinfeld}, the functor $\cA\otimes^\bbL-: \Ho(\dgcat) \to \Ho(\dgcat)$ preserves short exact sequences of dg categories. As a consequence, we obtain the following commutative diagram in $\Ho(\dgcat)$
\begin{equation*}
\xymatrix@C=2em@R=2em{
0 \ar[r] & \cA \otimes^\bbL \perf(X)_Z \ar[d]_-\sim \ar[r] & \cA\otimes^\bbL \perf(X) \ar[d]_-{\id \otimes^\bbL p^\ast} \ar[r]^-{\id \otimes^\bbL j ^\ast} & \cA \otimes^\bbL \perf(U) \ar[d] \ar[r] & 0  \\
0 \ar[r] & \cA \otimes^\bbL \perf(V)_{Z'} \ar[r] & \cA \otimes^\bbL \perf(V) \ar[r] & \cA \otimes^\bbL \perf(U \times_X V) \ar[r] & 0\,,
}
\end{equation*}
where both rows are short exact sequences of dg categories. Recall that by hypothesis $E$ sends (in a functorial way) short exact sequences of dg categories to distinguished triangles. Consequently, by applying $E$ to the preceding commutative diagram we obtain the following morphism between distinguished triangles:
\begin{equation*}
\xymatrix@C=1.2em@R=2em{
E(\cA\otimes^\bbL \perf(X)_Z) \ar[r] \ar[d]_-\sim & E(\cA\otimes^\bbL X) \ar[d]_-{E(\id\otimes^\bbL p^\ast)} \ar[r]^-{E(\id\otimes^\bbL j^\ast)} & E(\cA \otimes^\bbL U)\ar[d] \ar[r]^-{\partial} & \ar[d]^-\sim \Sigma E(\cA\otimes^\bbL \perf(X)_Z) \\
E(\cA\otimes^\bbL \perf(V)_{Z'}) \ar[r] & E(\cA \otimes^\bbL V) \ar[r] & E(\cA \otimes^\bbL (U \times_X V)) \ar[r]_-{\partial} & \Sigma E(\cA\otimes^\bbL \perf(V)_{Z'})\,.
}
\end{equation*}
Since the outer left and right vertical maps are isomorphisms we conclude that the middle square (which agrees with the above square \eqref{eq:square23}) is homotopy (co)cartesian. This achieves the proof.
\end{proof}
\begin{corollary}{(Mayer-Vietoris for open covers)}\label{cor:Zariski}
Let $X$ be a $k$-scheme which is covered by two Zariski open subschemes $U, V \subset X$. Then, given a dg category $\cA$ and a Morita invariant localizing functor $E:\dgcat \to \cM$, one obtains a Mayer-Vietoris triangle
$$ E(\cA \otimes^\bbL X) \to E(\cA\otimes^\bbL U) \oplus E(\cA \otimes^\bbL V) \stackrel{\pm}{\to} E(\cA \otimes^\bbL (U \cap V)) \stackrel{\partial}{\to} \Sigma E(\cA \otimes^\bbL X)\,.$$
\end{corollary}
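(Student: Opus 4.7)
The plan is to derive this as a direct specialization of Theorem~\ref{thm:Nisnevich}. Taking $j:U\hookrightarrow X$ and $p:V\hookrightarrow X$ to be the two Zariski open immersions, I first note that open immersions are {\'e}tale, so the only nontrivial hypothesis to check is the isomorphism of reduced complements $p^{-1}(X-U)_{\mathrm{red}}\simeq (X-U)_{\mathrm{red}}$. Because $U\cup V=X$ we have $X-U\subseteq V$ as topological subspaces, so $p^{-1}(X-U)=V\cap(X-U)=X-U$ as locally closed subschemes; the required isomorphism is therefore the identity. Moreover, the fiber product $U\times_X V$ coincides with the scheme-theoretic intersection $U\cap V$.

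With these identifications, Theorem~\ref{thm:Nisnevich} supplies a homotopy (co)cartesian square
\[
\xymatrix@C=2em@R=2em{
E(\cA\otimes^\bbL X) \ar[r] \ar[d] \ar@{}[dr]|{\square} & E(\cA\otimes^\bbL U) \ar[d] \\
E(\cA\otimes^\bbL V) \ar[r] & E(\cA\otimes^\bbL (U\cap V))
}
\]
in $\Ho(\cM)$.

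The final step is the standard conversion of a homotopy (co)cartesian square in a stable homotopy category into a distinguished triangle: a square in a triangulated category is homotopy cartesian precisely when the induced map from the total cofiber, or equivalently the Mayer--Vietoris sequence, is an isomorphism (\cite[Def.~1.4.1]{Neeman}). Concretely, pairing the two horizontal maps with opposite signs yields the triangle
\[
E(\cA\otimes^\bbL X)\to E(\cA\otimes^\bbL U)\oplus E(\cA\otimes^\bbL V)\stackrel{\pm}{\to}E(\cA\otimes^\bbL (U\cap V))\stackrel{\partial}{\to}\Sigma E(\cA\otimes^\bbL X)\,,
\]
which is exactly the asserted Mayer--Vietoris triangle.

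There is essentially no obstacle here: the only content beyond a reference to Theorem~\ref{thm:Nisnevich} is the verification that a Zariski two-cover fits into a distinguished square in the sense of \eqref{eq:distinguished}, and this is immediate from $U\cup V=X$.
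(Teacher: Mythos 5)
Your proposal is correct and follows exactly the paper's own argument: specialize Theorem~\ref{thm:Nisnevich} to the case where $p$ is the open immersion $V\hookrightarrow X$ (so that $U\times_X V = U\cap V$ and the condition on reduced complements is automatic from $U\cup V=X$), then invoke the standard passage from a homotopy (co)cartesian square to its Mayer--Vietoris triangle as in \cite[\S1.4]{Neeman}. Your explicit verification of the hypothesis on $p^{-1}(X-U)_{\mathrm{red}}$ is a welcome detail the paper leaves implicit.
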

\begin{proof}
This follows from the fact that when the morphism $p$ in the square \eqref{eq:distinguished} is an open immersion, $U \times_X V$ identifies with $U \cap V$; recall also from \cite[\S1.4]{Neeman} that every homotopy (co)cartesian square has an associated distinguished ``Mayer-Vietoris'' triangle.
\end{proof}
\section{Generalized fundamental theorem}\label{sec:fundamental}
The following theorem was proved by Bass \cite[\S XII-\S 7-8]{Bass} for $n \leq 0$ and by Quillen \cite{Quillen-new} for $n \geq 1$.
\begin{theorem}{(Bass' fundamental theorem)}\label{thm:Bass}
Let $R$ be a ring and $n$ an integer. Then, we have the following exact sequence of abelian groups
$$ 0 \to K_n(R) \stackrel{\Delta}{\to} K_n(R[x])\oplus K_n(R[1/x]) \stackrel{\pm}{\to} K_n(R[x,1/x]) \stackrel{\partial_n}{\to} K_{n-1}(R) \to 0\,.$$
\end{theorem}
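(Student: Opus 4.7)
The plan is to deduce the theorem from the Nisnevich Mayer--Vietoris established in Corollary~\ref{cor:Zariski} combined with the classical projective bundle formula. I would first apply Corollary~\ref{cor:Zariski} to $\cA := \underline{R}$, $E := K$, and the standard affine open cover $\bbP^1_k = U \cup V$ with $U := \Spec k[x]$, $V := \Spec k[1/x]$, and $U \cap V = \Spec k[x, 1/x]$. Since $\underline{R} \otimes^\bbL \perf(\bbP^1_k) \simeq \perf(\bbP^1_R)$, and analogously on the affine charts, the corollary furnishes a distinguished triangle
$$K(\bbP^1_R) \too K(R[x]) \oplus K(R[1/x]) \stackrel{\pm}{\too} K(R[x, 1/x]) \stackrel{\partial}{\too} \Sigma K(\bbP^1_R)$$
in $\Ho(\Spt)$, and hence a long exact sequence in $\pi_\ast$ relating all the groups in sight.

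Next, I would invoke the classical projective bundle formula for algebraic $K$-theory, giving an isomorphism $K(\bbP^1_R) \simeq K(R) \oplus K(R)$ induced by the classes $[\cO_{\bbP^1}]$ and $[\cO(-1)]$. Under this identification, each of the two pullback maps $K(\bbP^1_R) \to K(R[x])$ and $K(\bbP^1_R) \to K(R[1/x])$ sends $(a, b) \mapsto \iota(a + b)$, where $\iota$ is the evident pullback $K_\ast(R) \to K_\ast(R[x])$ (respectively $K_\ast(R) \to K_\ast(R[1/x])$), since $\cO(-1)$ trivializes on each affine chart. Crucially, $\iota$ is a split injection via evaluation at $x = 0$ (respectively $1/x = 0$).

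Finally, a bookkeeping analysis of the long exact sequence then yields the claimed four-term exact sequence. The kernel of $K_n(\bbP^1_R) \to K_n(R[x]) \oplus K_n(R[1/x])$ is the ``anti-diagonal'' $\{(a, -a) : a \in K_n(R)\} \cong K_n(R)$, while its image is the diagonal embedding $\Delta: K_n(R) \hookrightarrow K_n(R[x]) \oplus K_n(R[1/x])$, which is therefore injective; analogously, the connecting map $K_n(R[x, 1/x]) \to K_{n-1}(\bbP^1_R)$ has image the anti-diagonal copy of $K_{n-1}(R)$, and composing with the projection onto this summand produces the surjection $\partial_n$. Splicing these pieces together gives the desired short exact sequence. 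The main obstacle I anticipate is precisely this bookkeeping step: one must identify the diagonal/anti-diagonal summands of $K_\ast(\bbP^1_R)$ consistently on both ends of the sequence and confirm that the connecting homomorphism really does split off a copy of $K_{n-1}(R)$, which relies on the split-injectivity of $\iota$ via evaluation at zero.
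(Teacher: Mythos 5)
Your argument is correct and is essentially the route the paper itself takes: the paper does not reprove Theorem~\ref{thm:Bass} (it is quoted from Bass and Quillen/Grayson), but its proof of the generalization, Theorem~\ref{thm:main22}, follows exactly your strategy --- Mayer--Vietoris for the standard affine cover of $\bbP^1$ (Corollary~\ref{cor:Zariski}), the projective bundle decomposition of $K(\bbP^1_R)$ via $\cO_{\bbP^1}(0)$ and $\cO_{\bbP^1}(-1)$, the observation that both line bundles trivialize on each chart so that the restriction maps factor through $a+b$, and then the diagonal/anti-diagonal bookkeeping in the long exact sequence (carried out there via the change of basis of Lemma~\ref{lem:general} and the split injectivity of $K_n(R)\to K_n(R[x])$). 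Your proposal is thus the specialization of the paper's proof of Theorem~\ref{thm:main22} to $E=K$ and $\cA=\underline{R}$.
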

In this section we generalize it as follows:
\begin{theorem}{(Generalized fundamental theorem)} \label{thm:main22}
Let $\cA$ be a dg category, $n$ an integer, $E:\dgcat \to \cM$ a Morita invariant localizing functor, and $b$ and object of $\cM$. Then, we have the following exact sequence of abelian groups
\begin{equation}\label{eq:seq-main}
0\to E^b_n(\cA) \stackrel{\Delta}{\to} E_n^b(\cA[x]) \oplus E^b_n(\cA[1/x]) \stackrel{\pm}{\to} E_n^b(\cA[x,1/x])\stackrel{\partial_n}{\to} E^b_{n-1}(\cA) \to 0\,.
\end{equation}
\end{theorem}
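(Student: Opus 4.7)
The strategy is to combine Corollary~\ref{cor:Zariski} with the projective bundle decomposition of $\bbP^1_k$. I first apply the Mayer-Vietoris formula of that corollary to the standard open cover $\bbP^1_k = U \cup V$ with $U = \Spec(k[x])$, $V = \Spec(k[1/x])$, and $U \cap V = \Spec(k[x,1/x])$. This yields a distinguished triangle
$$E(\cA \otimes^\bbL \bbP^1_k) \stackrel{\alpha}{\to} E(\cA[x]) \oplus E(\cA[1/x]) \stackrel{\pm}{\to} E(\cA[x,1/x]) \stackrel{\partial}{\to} \Sigma E(\cA \otimes^\bbL \bbP^1_k)$$
in $\Ho(\cM)$. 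Applying the cohomological functor $\Hom_{\Ho(\cM)}(\Sigma^n b, -)$ produces a long exact sequence of abelian groups connecting the four groups appearing in~\eqref{eq:seq-main} to $E_n^b(\cA \otimes^\bbL \bbP^1_k)$ and $E_{n-1}^b(\cA \otimes^\bbL \bbP^1_k)$.

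Next, I invoke Beilinson's semi-orthogonal decomposition $\perf(\bbP^1_k) = \langle \cO(-1), \cO \rangle$; each factor is equivalent to $\underline{k}$. This decomposition produces a split short exact sequence $0 \to \underline{k} \to \perf(\bbP^1_k) \to \underline{k} \to 0$ in $\Ho(\dgcat)$, in which the two corresponding embeddings of $\underline{k}$ into $\perf(\bbP^1_k)$ are given by tensoring with $\cO$ and with $\cO(-1)$. Tensoring with $\cA$ (which preserves short exact sequences by the Drinfeld result already invoked in~\S\ref{sec:Nisnevich}) and applying the localizing functor $E$ then yields a splitting
$$E(\cA \otimes^\bbL \bbP^1_k) \;\cong\; E(\cA) \oplus E(\cA)$$
in $\Ho(\cM)$.

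The crucial observation is that both line bundles $\cO$ and $\cO(-1)$ restrict to the trivial line bundle on each of the affine charts $U$ and $V$. Consequently, under the above splitting, the Mayer-Vietoris map $\alpha$ is identified with the composition $E(\cA) \oplus E(\cA) \stackrel{+}{\to} E(\cA) \stackrel{\Delta}{\to} E(\cA[x]) \oplus E(\cA[1/x])$, namely summation followed by the combined structure maps. Passing to abelian groups, $\alpha_n$ has image $\Delta(E_n^b(\cA))$ and antidiagonal kernel $\{(a,-a)\}\cong E_n^b(\cA)$, and the analogous computation in degree $n-1$ shows $\ker(\alpha_{n-1}) \cong E_{n-1}^b(\cA)$. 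Moreover the diagonal map $\Delta: E_n^b(\cA) \to E_n^b(\cA[x]) \oplus E_n^b(\cA[1/x])$ is injective, since evaluation $x \mapsto 0$ retracts the structure morphism $\cA \to \cA[x]$. Substituting these identifications into the Mayer-Vietoris long exact sequence mechanically extracts~\eqref{eq:seq-main}: $\ker(\pm) = \mathrm{im}(\alpha_n) = \Delta(E_n^b(\cA))$ yields exactness at the middle term, while $\mathrm{im}(\partial_n) = \ker(\alpha_{n-1}) \cong E_{n-1}^b(\cA)$ yields the surjection at the right.

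The main technical obstacle is the second step: verifying, in the generality of an arbitrary Morita invariant localizing functor, that Beilinson's semi-orthogonal decomposition of $\perf(\bbP^1_k)$ gives a split short exact sequence of small dg categories, and that this splitting survives the derived tensor product with an arbitrary $\cA$. Once the projective bundle decomposition is in place, the identification of $\alpha$ reduces to the elementary triviality of line bundles on affine space, and~\eqref{eq:seq-main} drops out of the long exact sequence.
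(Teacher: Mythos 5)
Your proposal is correct and follows essentially the same route as the paper: the Mayer--Vietoris triangle for the standard affine cover of $\bbP^1$, the Beilinson--Thomason split exact sequence giving $E(\cA\otimes^\bbL \bbP^1)\simeq E(\cA)\oplus E(\cA)$, and the key observation that $\cO$ and $\cO(-1)$ both trivialize on the two charts. The only difference is bookkeeping: where you compute the image and (antidiagonal) kernel of the map ``sum followed by $\Delta$'' directly, the paper first performs a change of basis (its Lemma~\ref{lem:general}) so that the comparison map vanishes on the second summand, which amounts to the same splitting of the long exact sequence.
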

\begin{remark}
A version of \eqref{eq:seq-main} for $k$-schemes can be found in Remark~\ref{rk:very-new}.
\end{remark}
\begin{proof}
Let $\bbP^1$ be the projective line over $\mathrm{spec}(k)$ and $i: \mathrm{spec}(k[x]) \subset \bbP^1$ and $j:\mathrm{spec}(k[1/x]) \subset \bbP^1$ its standard Zariski open cover. Since $\mathrm{spec}(k[x]) \cap \mathrm{spec}(k[1/x])= \mathrm{spec}(k[x,1/x])$, one obtains from Corollary~\ref{cor:Zariski} the following distinguished triangle 
\begin{equation}\label{eq:triangle-referee}
E(\cA \otimes^\bbL \bbP^1) \stackrel{(E(\id\otimes^\bbL i^\ast), E(\id\otimes^\bbL j^\ast))}{\too} E(\cA[x])\oplus E(\cA[1/x]) \stackrel{\pm}{\to} E(\cA[x,1/x]) \stackrel{\partial}{\to} \Sigma E(\cA\otimes^\bbL \bbP^1)\,.
\end{equation}
Note that since $k[x]$, $k[1/x]$ and $k[x,1/x]$ are all $k$-flat algebras, the derived tensor product agrees with the classical one. Let us now study the object $E(\cA \otimes^\bbL \bbP^1)$. As explained by Thomason in \cite[\S2.5-2.7]{Thomason}, we have two fully faithful dg functors 
\begin{eqnarray*}
\iota_0: \perf(\mathrm{pt}) \to \perf(\bbP^1)&& \cO_{\mathrm{pt}} \mapsto \cO_{\bbP^1}(0) \\
\iota_{-1}: \perf(\mathrm{pt}) \to \perf(\bbP^1)&& \cO_{\mathrm{pt}} \mapsto \cO_{\bbP^1}(-1) \,.
\end{eqnarray*}
Moreover, $\iota_{-1}$ induces a Morita equivalence between $\perf(\mathrm{pt})$ and Drinfeld's dg quotient $\perf(\bbP^1)/\iota_0(\perf(\mathrm{pt}))$ (see \cite[\S4.4]{ICM-Keller}). Following \cite[\S13]{Additive}, we obtain then a well-defined {\em split} short exact sequence of dg categories
\begin{equation}\label{eq:split}
\xymatrix{
0 \ar[r] & \perf(\mathrm{pt}) \ar[r]_-{\iota_0}  & \perf(\bbP^1) \ar[r]_s \ar@/_2ex/[l]_-r & \perf(\mathrm{pt}) \ar@/_2ex/[l]_-{\iota_{-1}}  \ar[r] & 0\,,
}
\end{equation}
where $r$ is the right adjoint of $\iota_0$, $r \circ \iota_0 =\id$, $\iota_{-1}$ is right adjoint of $s$, and $\iota_{-1} \circ s=\id$. As explained in the proof of Theorem~\ref{thm:Nisnevich}, the functor $\cA\otimes^\bbL-:\Ho(\dgcat) \to \Ho(\dgcat)$ preserves split short exact sequences of dg categories. Moreover, every localizing functor sends split short exact sequences to split distinguished triangles, \ie to direct sums in $\Ho(\cM)$. Therefore, by first applying $\cA\otimes^\bbL-$ to \eqref{eq:split} and then the functor $E$ we obtain the following isomorphism
\begin{equation}\label{eq:isom-1}
(E(\id\otimes^\bbL \iota_0), E(\id\otimes^\bbL \iota_{-1})): E(\cA \otimes \underline{k}) \oplus E(\cA \otimes \underline{k}) \stackrel{\sim}{\too} E(\cA \otimes^\bbL \bbP^1)\,.
\end{equation}
Recall that the line bundles $\cO_{\bbP^1}(0)$ and $\cO_{\bbP^1}(-1)$ become isomorphic when restricted to $\mathrm{spec}(k[x])$ and $\mathrm{spec}(k[1/x])$. Hence, we have the commutative diagrams
$$
\xymatrix@C=3em@R=1em{
\perf(\mathrm{pt}) \ar@/^1ex/[r]^-{\iota_0} \ar@/_1ex/[r]_{\iota_{-1}} & \perf(\bbP^1) \ar[r]^-{i^\ast} & \perf(\mathrm{spec}(k[x]))\\
\perf(\mathrm{pt}) \ar@/^1ex/[r]^-{\iota_0} \ar@/_1ex/[r]_{\iota_{-1}} & \perf(\bbP^1) \ar[r]^-{j^\ast} & \perf(\mathrm{spec}(k[1/x])) 
}
$$
and consequently we obtain the equalities:
\begin{eqnarray}
E(\id\otimes^\bbL i^\ast) \circ E(\id\otimes^\bbL \iota_0) = E(\id\otimes^\bbL i^\ast) \circ E(\id\otimes^\bbL \iota_{-1}) \label{eq:equality-1}\\
E(\id\otimes^\bbL j^\ast) \circ E(\id\otimes^\bbL \iota_0) = E(\id\otimes^\bbL j^\ast) \circ E(\id\otimes^\bbL \iota_{-1})\,. \label{eq:equality-2}
\end{eqnarray}
Now, apply Lemma~\ref{lem:general} to isomorphism \eqref{eq:isom-1} and then compose the result with $(E(\id \otimes^\bbL i^\ast), E(\id \otimes^\bbL j^\ast))$. Thanks to \eqref{eq:equality-1}-\eqref{eq:equality-2}, we obtain a morphism 
\begin{equation}\label{eq:morphism-key}
\Psi: E(\cA\otimes \underline{k}) \oplus E(\cA \otimes \underline{k}) \too E(\cA[x]) \oplus E(\cA[1/x])
\end{equation}
which is zero on the second component and
\begin{equation*}
\big(E(\id \otimes i^\ast) \circ E(\id \otimes \iota_0),E(\id \otimes j^\ast) \circ E(\id \otimes \iota_0)\big)
\end{equation*}
on the first component; note once again that since $k$, $k[x]$ and $k[1/x]$ are $k$-flat the derived tensor product agrees with the classical one. Making use of \eqref{eq:morphism-key}, the above distinguished triangle \eqref{eq:triangle-referee} identifies with
$$ E(\cA) \oplus E(\cA) \stackrel{\Psi}{\to} E(\cA[x]) \oplus E(\cA[1/x]) \stackrel{\pm}{\to} E(\cA[x,1/x])\stackrel{\partial}{\to} \Sigma E(\cA) \oplus \Sigma E(\cA)\,.$$
By applying to it the functor $\Hom_{\Ho(\cM)}(\Sigma^n b,-)$ we obtain then a long exact sequence
$$
\xymatrix@C=1em@R=0.5em{
\cdots \to E_n^b(\cA) \oplus E_n^b(\cA) \ar[r]^-{\Psi_n} & E_n^b(\cA[x]) \oplus E_n^b(\cA[1/x]) \ar[r]^-\pm & E^b_n(\cA[x,1/x]) -\!\!\!-\!\!\!-\!\!\!- \\
\stackrel{\partial_n}{\to} E_{n-1}^b(\cA) \oplus E_{n-1}^b(\cA)   \ar[r]^-{\Psi_{n-1}} & E_{n-1}^b(\cA[x]) \oplus E_{n-1}^b(\cA[1/x]) \ar[r]^-{\pm} & E_{n-1}^b(\cA[x,1/x]) \to \cdots
}
$$
As explained above, $\Psi_n$ is zero when restricted to the second component. Moreover, since the inclusions $k \subset k[x]$ and $k \subset k[1/x]$ admits canonical retractions, $\Psi_n$ is injective when restricted to the first component. This implies that the image of $\partial_n$ is precisely the second component of the direct sum. As a consequence, the above long exact sequence breaks up into the exact sequences \eqref{eq:seq-main}. This achieves the proof.
\end{proof}

\begin{lemma}\label{lem:general}
If $(f,g):A\oplus A \stackrel{\sim}{\to} B$ is an isomorphism in an additive category, then $(f,f-g):A\oplus A \stackrel{\sim}{\to} B$ is also an isomorphism.
\end{lemma}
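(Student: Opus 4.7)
The plan is to exhibit $(f,f-g)$ as the composite of the given isomorphism $(f,g)$ with an automorphism of $A\oplus A$, so that the conclusion is automatic.

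Concretely, I would introduce the endomorphism
\[
M\;=\;\begin{pmatrix} 1 & 1 \\ 0 & -1 \end{pmatrix}\;:\;A\oplus A\too A\oplus A
\]
(i.e.\ the map sending $(a_1,a_2)$ to $(a_1+a_2,-a_2)$). A direct check gives
\[
(f,g)\circ M\;=\;(f,\;f-g)\,,
\]
since $(f,g)(a_1+a_2,-a_2)=f(a_1)+f(a_2)-g(a_2)=(f,f-g)(a_1,a_2)$. This computation makes sense in any additive category, where morphisms out of a biproduct are described by row-matrices with entries in the $\Hom$-groups.

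The remaining point is to note that $M$ is an automorphism, and the cleanest way is to observe $M^2=\mathrm{Id}_{A\oplus A}$ (matrix multiplication: $\begin{pmatrix} 1 & 1 \\ 0 & -1 \end{pmatrix}^{2}=\begin{pmatrix} 1 & 0 \\ 0 & 1 \end{pmatrix}$), so $M$ is its own inverse. Then $(f,f-g)=(f,g)\circ M$ is a composition of two isomorphisms, hence itself an isomorphism.

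There is no real obstacle here; the only thing to be mildly careful about is to phrase the matrix calculus in a way that is valid in a general additive category (using the universal property of the biproduct $A\oplus A$ rather than any element-chasing). Once that is observed, the proof is a one-line verification.
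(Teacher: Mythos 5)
Your proof is correct. It is essentially the same argument as the paper's, just packaged differently: the paper writes the inverse of $(f,g)$ as a column $(i,h)$ satisfying $fi+gh=\id$, $if=\id$, $hf=0$, $ig=0$, $hg=\id$, and then exhibits $(i+h,-h)$ as the inverse of $(f,f-g)$ by direct verification --- and $(i+h,-h)$ is precisely your matrix $M$ applied to the column $(i,h)$, i.e.\ the inverse of $(f,g)\circ M$ computed via your factorization. Your version makes the mechanism explicit (composition with the self-inverse elementary automorphism $M$ of $A\oplus A$), the paper's checks the resulting identities by hand; both are valid verbatim in any additive category via the biproduct matrix calculus.
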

\begin{proof}
Since $(f,g)$ is an isomorphism, there exist maps $i, h: B \to A$ such that $fi + gh =\id$, $if = \id$, $hf = 0$, $ig=0$, and $hg=\id$. Using these equalities one observes that $(i+h,-h):B \stackrel{\sim}{\to} A \oplus A$ is the inverse of $(f,f-g)$. 
\end{proof}
\begin{notation}\label{not:NE}
Given a dg category $\cA$, let us denote by $NE_n^b(\cA)$ the kernel of the surjective group homomorphism
\begin{equation}\label{eq:kernel}
E_n^b(\id \otimes (t=0)): E_n^b(\cA[t]) \stackrel{}{\too} E_n^b(\cA)\,.
\end{equation}
Note that the inclusion $k \subset k[t]$ gives rise to a direct sum decomposition $E^b_n(\cA[t]) \simeq NE_n^b(\cA) \oplus E_n^b(\cA)$. Note also that by induction on $m$, $\cA$ is $E_n^b$-regular if and only if $NE_n^b(\cA[t_1, \ldots, t_m])=0$ for all $m \geq 0$.
\end{notation}
\begin{corollary}\label{cor:N}
Under the notations and assumptions of Theorem~\ref{thm:main22}, we have the following exact sequence of abelian groups
\begin{equation*}
0 \to NE_n^b(\cA) \stackrel{\Delta}{\to} NE_n^b(\cA[x])\oplus NE_n^b(\cA[1/x]) \stackrel{\pm}{\to} NE_n^b(\cA[x,1/x]) \stackrel{\partial_n}{\to} NE_{n-1}^b(\cA) \to 0\,.
\end{equation*}
\end{corollary}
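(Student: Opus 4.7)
The plan is to deduce Corollary~\ref{cor:N} from Theorem~\ref{thm:main22} by a naturality argument, using that $NE_n^b$ appears as a functorial direct summand of $E_n^b$ on polynomial extensions.

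First, recall from Notation~\ref{not:NE} that for any dg category $\cB$ the inclusion $k \subset k[t]$ and the evaluation $t=0$ yield a natural split short exact sequence
\begin{equation*}
0 \too NE_n^b(\cB) \too E_n^b(\cB[t]) \too E_n^b(\cB) \too 0\,,
\end{equation*}
so that $E_n^b(\cB[t])\simeq NE_n^b(\cB)\oplus E_n^b(\cB)$ naturally in $\cB$. I would then apply Theorem~\ref{thm:main22} twice: once to the dg category $\cA$, obtaining the four-term exact sequence \eqref{eq:seq-main}, and once to the dg category $\cA[t]$, obtaining the analogous four-term exact sequence with $\cA$ replaced throughout by $\cA[t]$ (note that $\cA[t][x]=\cA[t,x]=\cA[x][t]$, and similarly for $1/x$ and $x,1/x$).

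The key step is to observe that the four-term exact sequence of Theorem~\ref{thm:main22} is natural in the dg category argument. Indeed, inspecting the proof one sees that the triangle \eqref{eq:triangle-referee}, the isomorphism \eqref{eq:isom-1}, and hence the morphism $\Psi$ together with the connecting map $\partial_n$ are all functorial with respect to dg functors out of $\cA$; applying the dg functor $\cA[t]\to \cA$ induced by $t=0$ therefore gives a morphism of four-term exact sequences from the one for $\cA[t]$ to the one for $\cA$. Moreover, the inclusion $k\subset k[t]$ provides a splitting of this morphism of sequences in each term.

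Hence, applying the splitting $E_n^b(\cB[t])\simeq NE_n^b(\cB)\oplus E_n^b(\cB)$ termwise (for $\cB=\cA,\cA[x],\cA[1/x],\cA[x,1/x]$), the four-term exact sequence of Theorem~\ref{thm:main22} for $\cA[t]$ decomposes as a direct sum of the four-term exact sequence for $\cA$ and a four-term sequence built out of $NE_n^b(\cA)$, $NE_n^b(\cA[x])\oplus NE_n^b(\cA[1/x])$, $NE_n^b(\cA[x,1/x])$, and $NE_{n-1}^b(\cA)$, with differentials $\Delta$, $\pm$, $\partial_n$ inherited from Theorem~\ref{thm:main22}. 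Since a direct sum of sequences of abelian groups is exact if and only if each summand is exact, and both the total sequence and the $\cA$-sequence are exact by Theorem~\ref{thm:main22}, the $N$-sequence must be exact as well. The main thing to verify carefully is the functoriality of the sequence \eqref{eq:seq-main} in $\cA$ (so that the splitting of $k\subset k[t] \twoheadrightarrow k$ induces a compatible splitting of the entire four-term exact sequence); this is the only non-formal point, and it follows directly from inspecting the construction of $\Psi$ and $\partial_n$ in the proof of Theorem~\ref{thm:main22}.
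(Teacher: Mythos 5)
Your proposal is correct and is essentially the paper's own argument: the paper's proof consists of the single sentence that the corollary ``follows automatically from the naturality of \eqref{eq:seq-main}'', and you have simply spelled out what that naturality argument is (splitting the four-term sequence for $\cA[t]$ via $t=0$ and $k\subset k[t]$ into the sequence for $\cA$ plus the $N$-sequence, and using that a direct summand of an exact sequence is exact).
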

\begin{proof}
This follows automatically from the naturality of \eqref{eq:seq-main}.
\end{proof}
\section{Proof of Theorem~\ref{thm:main}}\label{sec:proof-main}
Consider the following ``substitution'' $k$-algebra homomorphism
\begin{eqnarray}\label{eq:substitution}
k[x][t] \too k[x][t] && p(x,t) \mapsto p(x,xt)\,.
\end{eqnarray}
Given a dg category $\cB$, let us denote by $\mathrm{colim}\, NE_n^b(\cB[x])$ the direct limit of the following diagram of abelian groups
$$ NE_n^b(\cB[x]) \stackrel{NE_n^b(\id\otimes \eqref{eq:substitution})}{\too} NE_n^b(\cB[x]) \stackrel{NE_n^b(\id\otimes \eqref{eq:substitution})}{\too} NE_n^b(\cB[x]) \stackrel{NE_n^b(\id\otimes \eqref{eq:substitution})}{\too} \cdots $$
We start by proving that we have a group isomorphism
\begin{equation}\label{eq:searched}
\mathrm{colim}\, NE_n^b(\cB[x]) \simeq NE_n^b(\cB[x,x^{-1}])\,.
\end{equation}
Consider first the commutative diagram
\begin{equation}\label{eq:diagram33}
\xymatrix@C=5em@R=2em{
k[x][t] \ar[d]_-{(t=0)} \ar[r]^-{\eqref{eq:substitution}} & k[x][t] \ar[d]_-{(t=0)} \ar[r]^-{\eqref{eq:substitution}} & k[x][t] \ar[d]_-{(t=0)} \ar[r]^-{\eqref{eq:substitution}} & \cdots\\
k[x] \ar@{=}[r] & k[x] \ar@{=}[r] & k[x] \ar@{=}[r] & \cdots
}
\end{equation}
Note that the colimit of the lower row is $k[x]$ while the colimit of the upper row is the $k$-algebra $R:= k[x] +t k[x,1/x][t] \subset k[x,1/x][t]$. By first tensoring \eqref{eq:diagram33} with $\cB$ and then applying the functor $E_n^b$ we obtain the commutative diagram
\begin{equation}\label{eq:diagram-big}
\xymatrix@C=5em@R=2em{
NE_n^b(\cB[x]) \ar[r]^-{NE^b_n(\id\otimes\eqref{eq:substitution})} \ar[d] & NE_n^b(\cB[x]) \ar[r]^-{NE^b_n(\id\otimes\eqref{eq:substitution})} \ar[d] & NE_n^b(\cB[x]) \ar[r]^-{NE^b_n(\id\otimes\eqref{eq:substitution})} \ar[d] & \cdots \\
E_n^b(\cB[x][t]) \ar[r]^-{E^b_n(\id\otimes\eqref{eq:substitution})} \ar[d]_-{\eqref{eq:kernel}} & E_n^b(\cB[x][t]) \ar[r]^-{E^b_n(\id\otimes\eqref{eq:substitution})} \ar[d]_-{\eqref{eq:kernel}} & E_n^b(\cB[x][t]) \ar[r]^-{E^b_n(\id\otimes\eqref{eq:substitution})} \ar[d]_-{\eqref{eq:kernel}} & \cdots \\
E_n^b(\cB[x]) \ar@{=}[r] & E_n^b(\cB[x]) \ar@{=}[r] & E_n^b(\cB[x]) \ar@{=}[r] & \cdots 
}
\end{equation}
Recall from Notation \eqref{not:NE} that each column is a (split) short exact sequence of abelian groups. The colimit of the lower row is clearly $E_n^b(\cB[x])$. Since the functors $\cB \otimes -:\dgcat \to \dgcat$ and $E:\dgcat\to \cM$ preserve filtered (homotopy) colimits and $b$ is a compact object of $\Ho(\cM)$, the colimit of the middle row identifies with $E_n^b(\cB \otimes \underline{R})$. Hence, from diagram \eqref{eq:diagram-big} one obtains the isomorphism
\begin{equation}\label{eq:aux1}
\mathrm{colim}\, NE_n^b(\cB[x]) \simeq \mathrm{Ker} \big(E_n^b(\cB \otimes \underline{R}) \stackrel{\eqref{eq:kernel}}{\too} E_n^b(\cB[x]) \big)\,.
\end{equation} 
Now, consider the $k$-algebras $R$ and $k[x]$ endowed with the sets of left denominators $S_1:=\{x^n\}_{n\geq 0} \subset R$ and $S_2:=\{x^n\}_{n\geq 0} \subset k[x]$. The $k$-algebra homomorphism 
\begin{eqnarray}\label{eq:map}
R= k[x] +tk[x,1/x][t] \too k[x] && t \mapsto 0
\end{eqnarray}
identifies $S_1$ with $S_2$ and moreover induces a quasi-isomorphism
$$
\xymatrix@C=3em@R=1.5em{
0 \ar[r] & R \ar[d]_-{\eqref{eq:map}} \ar[r] & R[S_1^{-1}]=k[x,1/x][t] \ar[d]_-{\eqref{eq:map}} \ar[r] & 0 \\
0 \ar[r] & k[x] \ar[r] & k[x][S_2^{-1}]=k[x,1/x]\ar[r] & 0\,.
}
$$
As a consequence, since $R$ and $k[x]$ are clearly $k$-flat algebras, conditions a) and b) of \cite[\S4.2]{Keller-ilc} are satisfied. In {\em loc. cit.} Keller also assumes that the base ring $k$ is coherent and of finite dimensional global dimension. However, these extra assumptions are only used to prove the localization theorem for model categories; see \cite[\S5-6]{Keller-ilc}. We obtain then a commutative diagram in $\Ho(\dgcat)$
\begin{equation}\label{eq:diagram11}
\xymatrix@C=3em@R=1.5em{
0 \ar[r] & \underline{A_1} \ar[d]_-\sim \ar[r] & \perf(R) \ar[d] \ar[r] & \perf(k[x,x^{-1}][t]) \ar[d] \ar[r] & 0 \\
0 \ar[r] & \underline{A_2} \ar[r] & \perf(k[x]) \ar[r] & \perf(k[x,x^{-1}]) \ar[r] & 0\,,
}
\end{equation}
where moreover each row is a short exact sequence of dg categories and the left vertical map is a quasi-isomorphism (and hence a Morita equivalence) of dg $k$-algebras; consult \cite[\S4.3]{Keller-ilc} for further details. By first tensoring \eqref{eq:diagram11} with $\cB$ and then applying the functor $E$ we obtain (as in the proof of Theorem~\ref{thm:Nisnevich}) a homotopy (co)cartesian square 
\begin{equation}\label{eq:diagram-square}
\xymatrix@C=3em@R=2em{
E(\cB \otimes \underline{R}) \ar[d] \ar[r] \ar@{}[dr]|{\square} & E(\cB[x,1/x][t]) \ar[d] \\
E(\cB[x]) \ar[r] & E(\cB[x,1/x])\,.
}
\end{equation}
Note that since $R$, $k[x]$, $k[x,1/x]$, and $k[x,1/x][t]$ are all $k$-flat algebras, the derived tensor product agrees with the classical one. Note also that the natural inclusions $k[x] \subset R$ and $k[x,1/x] \subset k[x,1/x][t]$ give rise to sections of the vertical maps. As a consequence, since \eqref{eq:diagram-square} is homotopy (co)cartesian, we obtain an induced isomorphism
$$ \mathrm{Ker}\big(E_n^b(\cB\otimes \underline{R}) \stackrel{\eqref{eq:kernel}}{\to} E_n^b(\cB[x]) \big) \stackrel{\sim}{\too} \mathrm{Ker}\big(E_n^b(\cB[x,1/x][t]) \stackrel{\eqref{eq:kernel}}{\to} E_n^b(\cB[x,1/x]) \big)\,.$$
Since the right-hand-side is by definition $NE_n^b(\cB[x,1/x])$ the searched isomorphism \eqref{eq:searched} follows now from isomorphism \eqref{eq:aux1}.

We are now ready to conclude the proof. As explained in Notation~\ref{not:NE}, a dg category $\cA$ is $E_n^b$-regular if and only if $NE_n^b(\cA[t_1,\ldots, t_m])=0$ for any all $m \geq 0$. Since by hypothesis $\cA$ is $E_n^b$-regular we hence have $NE^b_n(\cA[t_1, \ldots, t_m])=0$ for all $m \geq 0$. Using isomorphism \eqref{eq:searched} (with $\cB=\cA[t_1, \ldots t_{m-1}]$) we conclude that 
$$ \mathrm{colim}\, NE_n^b(\cA[t_1, \ldots, t_{m-1}][x])\simeq NE_n^b(\cA[t_1, \ldots, t_{m-1}][x, 1/x])=0\,.$$
The exact sequence of Corollary \ref{cor:N} (with $\cA=\cA[t_1, \ldots, t_{m-1}])$ implies that $NE^b_{n-1}(\cA[t_1, \ldots, t_{m-1}])=0$. Since this holds for every $m \geq 0$, we conclude finally that $\cA$ is $E_{n-1}^b$-regular. This concludes the proof of Theorem~\ref{thm:main}.
\section{Proof of Theorem \ref{thm:new}}
Item (i) follows from the combination of implication \eqref{eq:implication-main} with the equalities
$$ E_n^{\Sigma^{-i}b}(\cA) := \Hom_{\Ho(\cM)}(\Sigma^n(\Sigma^{-i}b),E(\cA))=\Hom_{\Ho(\cM)}(\Sigma^{n-i}b,E(\cA))=:E^b_{n-i}(\cA)\,.$$
In what concerns item (ii), note that by applying the bifunctor $\Hom_{\Ho(\cM)}(-,-)$ to the sequence $\Sigma^{n-1}c' \to \Sigma^{n-1}c'' \to \Sigma^nc \to \Sigma^nc' \to \Sigma^nc''$ in the first variable and to the morphism $E(\cA) \to E(\cA[t_1, \ldots, t_m])$ in the second variable, one obtains the following commutative diagram
$$
\xymatrix@C=5em@R=1.5em{
E_n^{c''}(\cA) \ar[d] \ar[r] & E_n^{c''}(\cA[t_1, \ldots, t_m]) \ar[d] \\
E_n^{c'}(\cA) \ar[d] \ar[r] & E_n^{c'}(\cA[t_1, \ldots, t_m]) \ar[d] \\
E_n^{c}(\cA) \ar[d] \ar[r] & E_n^{c}(\cA[t_1, \ldots, t_m]) \ar[d] \\
E_{n-1}^{c''}(\cA) \ar[d] \ar[r] & E_{n-1}^{c''}(\cA[t_1, \ldots, t_m]) \ar[d] \\
E_{n-1}^{c'}(\cA) \ar[r] & E_{n-1}^{c'}(\cA[t_1, \ldots, t_m])\,,
}
$$
where each column is exact. Since by hypothesis $\cA$ is $E_n^{c'}$-regular and $E_n^{c''}$-regular the two top horizontal morphisms are isomorphisms. Using implication \eqref{eq:implication-main} we conclude that the two bottom horizontal morphisms are also isomorphisms. Using the $5$-lemma one then concludes that the horizontal middle morphism is an isomorphism. This implies that $\cA$ is $E_n^c$-regular.

Let us now prove item (iii). Since by hypothesis $d$ is a direct factor of $b$, there exist morphisms $d \to b$ and $b \to d$ such that the composition $d \to b \to d$ equals the identity of $d$. This data gives naturally rise to the following commutative diagram
\begin{equation}\label{eq:retraction}
\xymatrix{
E_n^d(\cA) \ar[d] \ar[r] & E_n^b(\cA) \ar[d] \ar[r] & E_n^d(\cA) \ar[d] \\
E_n^d(\cA[t_1, \ldots, t_m]) \ar[r] & E_n^b(\cA[t_1, \ldots, t_m]) \ar[r] & E_n^d(\cA[t_1, \ldots, t_m])\,,
}
\end{equation}
where both horizontal compositions are the identity. By assumption, $\cA$ is $E_n^b$-regular and so the vertical middle morphism in \eqref{eq:retraction} is an isomorphism. From the commutativity of \eqref{eq:retraction} and the fact that isomorphisms are stable under retractions, one concludes that the vertical left-hand-side (or right-hand-side) morphism is also an isomorphism. This implies that $\cA$ is $E_n^d$-regular.

Item (iv) follow from the combination of implication \eqref{eq:implication-main} with the equalities
$$
 E_n^{\oplus_{i\in I} c_i}(\cA) :=  \Hom(\Sigma^n(\oplus_{i\in I} c_i),E(\cA))= \prod_{i\in I} \Hom(\Sigma^nc_i,E(\cA))  =:  \prod_{i \in I} E_n^{c_i}(\cA)\,,
$$
where we have removed the subscripts of $\Hom$ in order to simplify the exposition.
Let us now prove item (v). As explained in \cite[Thm.~1.2]{Suspension}, we have a canonical isomorphism $U(\sigma(\cA)) \stackrel{\sim}{\to} \Sigma U(\cA)$ in $\Ho(\Mot)$; in {\em loc. cit.} $\sigma(\cA)$ was denoted by $\Sigma(\cA)$ and $U$ by $\cU_\dg^{\mathsf{loc}}$. Hence, by applying the triangulated functor $\overline{E}$ of Proposition~\ref{prop:factorization} to the square below \eqref{eq:square-susp} (with $\cB:=\cA[t_1, \ldots, t_m]$), one obtains the square  
\begin{equation}\label{eq:square-new}
\xymatrix{
E(\sigma(\cA)) \ar[d] \ar[r]^-\sim & \Sigma E(\cA) \ar[d] \\
E(\sigma(\cA[t_1, \ldots, t_m])) \ar[r]_-\sim & \Sigma E(\cA[t_1, \ldots, t_m])
}
\end{equation}
in the homotopy category $\Ho(\cM)$. Since by construction $\sigma(\cA[t_1, \ldots, t_m])$ and $\sigma(\cA)[t_1, \ldots, t_m]$ are canonically isomorphic, \eqref{eq:square-new} gives rise to the following commutative diagram
\begin{equation}\label{eq:square11}
\xymatrix@C=2em@R=1.5em{
\Hom_{\Ho(\cM)}(\Sigma^{n+1}b,E(\sigma(\cA))) \ar[d] \ar[r]^-\sim & \Hom_{\Ho(\cM)}(\Sigma^{n+1}b,\Sigma E(\cA)) \ar[d] \\
\Hom_{\Ho(\cM)}(\Sigma^{n+1}b, E(\sigma(\cA)[t_1,\ldots, t_m])) \ar[r]_-\sim & \Hom_{\Ho(\cM)}(\Sigma^n b, \Sigma E(\cA[t_1, \ldots, t_m]))\,.
}
\end{equation}
Moreover, using the fact that $\Sigma^{-1}(-)$ is an autoequivalence of $\Ho(\cM)$, we have
\begin{equation}\label{eq:square22}
\xymatrix@C=2.5em@R=1.5em{
\Hom_{\Ho(\cM)}(\Sigma^{n+1}b, \Sigma E(\cA)) \ar[d] \ar[r]_-\sim^-{\Sigma^{-1}(-)} & \Hom_{\Ho(\cM)}(\Sigma^nb,E(\cA)) \ar[d] \\
\Hom_{\Ho(\cM)}(\Sigma^{n+1}b, \Sigma E(\cA[t_1, \ldots, t_m])) \ar[r]^-\sim_-{\Sigma^{-1}(-)} & \Hom_{\Ho(\cM)}(\Sigma^nb,E(\cA[t_1, \ldots, t_m]))\,.
}
\end{equation}
Now, recall that by hypothesis $\cA$ is $E_n^b$-regular. Hence, the vertical right-hand-side morphism in \eqref{eq:square22} is an isomorphism. Consequently, by combining \eqref{eq:square11}-\eqref{eq:square22}, we conclude that the vertical left-hand-side morphism in \eqref{eq:square11}, \ie $E_{n+1}^b(\sigma(\cA)) \to E_{n+1}^b(\sigma(\cA)[t_1, \ldots, t_m])$ is an isomorphism. This implies that $\sigma(\cA)$ is $E^b_{n+1}$-regular and so the proof is finished.
\begin{lemma}\label{lem:key-new}
Given a dg functor $F:\cA \to \cB$, we have a commutative diagram
\begin{equation}\label{eq:square-susp}
\xymatrix{
U(\sigma(\cA)) \ar[d]_-{U(\sigma(F))} \ar[r]^-{\sim} & \Sigma U(\cA) \ar[d]^-{\Sigma U(F)} \\
U(\sigma(\cB)) \ar[r]_-{\sim} & \Sigma U(\cB)
}
\end{equation}
in the homotopy category $\Ho(\Mot)$.
\end{lemma}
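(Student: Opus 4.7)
The plan is to unpack the construction of the isomorphism $U(\sigma(\cA)) \stackrel{\sim}{\to} \Sigma U(\cA)$ of \cite[Thm.~1.2]{Suspension} and observe that it is produced by a natural construction, hence automatically compatible with dg functors. Concretely, the definition of $\sigma$ in item (v) of Theorem~\ref{thm:new} gives rise to a short exact sequence of $k$-algebras
\begin{equation*}
0 \too k \too \Gamma \too \sigma \too 0\,,
\end{equation*}
and, after tensoring with $\cA$, to a short exact sequence of dg categories
\begin{equation*}
0 \too \cA \too \cA\otimes \underline{\Gamma} \too \sigma(\cA) \too 0\,.
\end{equation*}
Applying the localizing functor $U$ and using that $\Gamma$ is flasque (so that $U(\cA\otimes \underline{\Gamma}) \simeq 0$; this is precisely the content used in \cite{Suspension}), the boundary map of the resulting distinguished triangle yields the canonical isomorphism $U(\sigma(\cA)) \stackrel{\sim}{\to} \Sigma U(\cA)$.

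Next, I would observe that the above construction is strictly functorial in $\cA$. Indeed, a dg functor $F:\cA \to \cB$ induces a morphism of short exact sequences of dg categories
\begin{equation*}
\xymatrix@C=2em@R=1.5em{
0 \ar[r] & \cA \ar[d]_-F \ar[r] & \cA \otimes \underline{\Gamma} \ar[d]^-{F \otimes \id} \ar[r] & \sigma(\cA) \ar[d]^-{\sigma(F)} \ar[r] & 0 \\
0 \ar[r] & \cB \ar[r] & \cB \otimes \underline{\Gamma} \ar[r] & \sigma(\cB) \ar[r] & 0\,.
}
\end{equation*}
Applying the localizing functor $U$ produces a morphism of distinguished triangles in $\Ho(\Mot)$, whose middle terms vanish. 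The outer right square of that morphism of triangles is precisely the diagram \eqref{eq:square-susp}, so the commutativity is automatic.

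The only potential subtlety, and where I would focus the write-up, is to confirm that the isomorphism $U(\sigma(-)) \simeq \Sigma U(-)$ used in the above statement of item (v) is indeed the one manufactured via the boundary map of this exact sequence (as opposed to some other identification obtained via a chain of intermediate equivalences). Once this identification is in place, naturality is a formal consequence of the fact that the boundary maps of a localizing invariant are functorial with respect to morphisms of short exact sequences, a property built into the definition of $U$ (see Proposition~\ref{prop:factorization}). No calculation is needed beyond unpacking these definitions.
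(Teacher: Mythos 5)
Your proof is correct and follows essentially the same route as the paper: both apply $U$ to the morphism of short exact sequences $0 \to \cA\otimes\underline{k} \to \cA\otimes\underline{\Gamma} \to \sigma(\cA) \to 0$ induced by $F$ (this is \cite[Prop.~4.9]{Suspension}), use the vanishing of $U(\cA\otimes\underline{\Gamma})$ and $U(\cB\otimes\underline{\Gamma})$ to see that the connecting maps are isomorphisms, and read off the desired square from the resulting morphism of distinguished triangles. The only quibble is that the kernel of $\Gamma \to \sigma$ is the ideal of finite matrices rather than $k$ itself (they are Morita equivalent, so this is harmless for the Morita invariant $U$).
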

\begin{proof}
Thanks to \cite[Prop.~4.9]{Suspension}, we have the commutative diagram in $\Ho(\dgcat)$
\begin{equation}\label{eq:diagram}
\xymatrix{
0 \ar[r] & \cA \otimes \underline{k} \ar[d]_-{F \otimes \id} \ar[r] & \cA\otimes \underline{\Gamma} \ar[d]_-{F\otimes \id} \ar[r] & \cA\otimes \underline{\sigma} \ar[d]_-{F\otimes \id} \ar[r] & 0 \\
0 \ar[r] & \cB \otimes \underline{k} \ar[r] & \cB \otimes \underline{\Gamma} \ar[r] & \cB \otimes \underline{\sigma} \ar[r] & 0 \,,
}
\end{equation}
where both rows are short exact sequences of dg categories. Consequently, by applying the functor $U$ to \eqref{eq:diagram} we obtain the following morphism between distinguished triangles:
\begin{equation}\label{eq:diagram2}
\xymatrix{
U(\cA) \ar[d]_-{U(F)} \ar[r] & U(\cA\otimes \underline{\Gamma}) \ar[d] \ar[r] & U(\sigma(\cA)) \ar[d]_-{U(\sigma(F))} \ar[r]^-{\partial} & \Sigma E(\cA) \ar[d]^-{\Sigma U(F)} \\
U(\cB) \ar[r] & U(\cB \otimes \underline{\Gamma}) \ar[r] & U(\sigma(\cB)) \ar[r]_-{\partial}& \Sigma E(\cB)\,. 
}
\end{equation}
As explained in \cite[\S6]{Suspension}, $U(\cA\otimes \underline{\Gamma})$ and $U(\cB \otimes \underline{\Gamma})$ are isomorphic to zero in $\Ho(\Mot)$. Hence, the connecting morphisms $\partial$ are isomorphisms and so the searched commutative square \eqref{eq:square-susp} is the right-hand-side square in \eqref{eq:diagram2}. This achieves the proof.
\end{proof}
\section{Proof of Proposition \ref{prop:new}}
Consider the following distinguished triangle in $\Ho(\Mot)$
$$ U(\underline{k}) \stackrel{\cdot l}{\too} U(\underline{k}) \too U(\underline{k})/l \too \Sigma U(\underline{k})\,.$$
As proved in \cite[Prop.~2.12]{products}, one has the following isomorphisms
\begin{eqnarray*}
\Hom_{\Ho(\Mot)}(\Sigma^n(U(\underline{k})/l),U(\cA)) \simeq K_{n+1}(\cA;\bbZ/l) && n \in \bbZ\,.
\end{eqnarray*}
In {\em loc. cit.} the author worked with $k=\bbZ$ and with the additive version of $\Mot$ where localization is replaced by additivity; however, the arguments are exactly the same. The proof follows now from the fact that $U(\underline{k})/l\simeq \Sigma \mathrm{fib}(l)$ and from the definition $U_n^{\mathrm{fib}(l)}(\cA):=\Hom_{\Ho(\Mot)}(\Sigma^n\mathrm{fib}(l),U(\cA))$.
\section{Proof of Theorem \ref{thm:main2}}
Since by hypothesis $X$ is $E_n^b$-regular the isomorphism $E_n^b(X) \simeq E^b_n(X\times \bbA^m)$ holds for all $m \geq 1$. By applying Proposition~\ref{prop:schemes-monoidal} below to $X$ and to the $k$-flat $k$-scheme $Y=\bbA^m$ we obtain moreover the following isomorphisms
\begin{equation}\label{eq:isom-last}
E_n^b(X\times \bbA^m) \stackrel{\eqref{eq:isom-can}}{\simeq} E_n^b(\perf(X) \otimes \perf(\bbA^m)) \simeq E_n^b(\perf(X)[t_1,\ldots, t_m])\,.
\end{equation}
Note that since $\bbA^m=\mathrm{spec}(k[t_1,\ldots, t_m])$ is an affine $k$-flat algebra the derived tensor product agrees with the classical one. By combining \eqref{eq:isom-last} with the isomorphism $E_n^b(X)\simeq E^b_n(X\times \bbA^m)$ we conclude then that the dg category $\perf(X)$ is $E_n^b$-regular. By Theorem~\ref{thm:main} it is also $E_{n-1}^b$-regular. Hence, using again the above isomorphisms~\eqref{eq:isom-last} (with $n$ replaced by $n-1$) one concludes that the isomorphism $E_{n-1}^b(X) \simeq E_{n-1}^b(X \times \bbA^m)$ holds for all $m \geq 1$, \ie that $X$ is $E_{n-1}^b$-regular. This proves implication \eqref{eq:implication2}. Implication \eqref{eq:implication3} follows automatically from the combination of the above isomorphism \eqref{eq:isom-last} with implication \eqref{eq:implication-last}. Finally, implication \eqref{eq:implication4} follows from the combination of Proposition~\ref{prop:new} with implication \eqref{eq:implication2} and with \cite[Example~2.13]{products}. This achieves the proof.

\begin{proposition}\label{prop:schemes-monoidal}
Let $X$ and $Y$ be two quasi-compact separated $k$-schemes with $Y$ $k$-flat, $n$ an integer, $E:\dgcat \to \cM$ a Morita invariant localizing functor, and $b$ an object of $\cM$. Under these notations and assumptions, we have a canonical isomorphism
\begin{equation}\label{eq:isom-can}
E_n^b(-\boxtimes^\bbL-): E_n^b(\perf(X) \otimes^\bbL \perf(Y)) \stackrel{\sim}{\too} E_n^b(\perf(X \times Y))\,.
\end{equation}
\end{proposition}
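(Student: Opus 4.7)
The plan is to establish the stronger statement that the external tensor product dg functor $\boxtimes^\bbL:\perf(X)\otimes^\bbL \perf(Y)\to \perf(X\times Y)$ induces an isomorphism $E(\perf(X)\otimes^\bbL \perf(Y))\isoto E(\perf(X\times Y))$ in $\Ho(\cM)$; applying $\Hom_{\Ho(\cM)}(\Sigma^n b,-)$ to such an isomorphism then yields the claimed isomorphism \eqref{eq:isom-can}. I would argue by a double induction built on Corollary~\ref{cor:Zariski}. Recall that $X$ and $Y$, being quasi-compact and separated, admit finite affine Zariski covers, and that in a separated scheme the intersection of two affine opens is again affine; moreover, $k$-flatness passes to open subschemes, so if $Y$ is $k$-flat then every Zariski open in $Y$ is $k$-flat as well.

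Fix $Y$ $k$-flat for the moment and induct on the minimal cardinality $n$ of an affine open cover of $X$. When $n>1$, write $X=U\cup V$ with $V$ affine and $U$ covered by $n-1$ affines; the intersection $U\cap V$ is then also covered by $n-1$ affines. Apply Corollary~\ref{cor:Zariski} twice: once with $\cA=\perf(Y)$ to the cover $X=U\cup V$, producing a Mayer--Vietoris triangle whose terms are $E(\perf(-)\otimes^\bbL \perf(Y))$ evaluated at $X$, $U$, $V$, and $U\cap V$; and once with $\cA=\underline{k}$ to the cover $X\times Y=(U\times Y)\cup(V\times Y)$, whose intersection is $(U\cap V)\times Y$, producing the analogous Mayer--Vietoris triangle for $E(\perf(-\times Y))$. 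The functor $\boxtimes^\bbL$ is natural in open restriction in the first variable, hence it yields a morphism between these two distinguished triangles. By the induction hypothesis the morphism is an isomorphism on the terms indexed by $U$, $V$, and $U\cap V$; the third-vertex principle in a triangulated category then forces it to be an isomorphism on the term indexed by $X$. The base case $n=1$ reduces us to $X=\mathrm{spec}(A)$ affine.

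With $X$ fixed affine, I iterate the same Mayer--Vietoris argument in the variable $Y$ (using Corollary~\ref{cor:Zariski} with $\cA=\perf(X)$ on one side and $\cA=\underline{k}$ on the other, applied to covers $Y=U'\cup V'$ and $X\times Y=(X\times U')\cup(X\times V')$) to further reduce to the affine-affine situation $Y=\mathrm{spec}(B)$. In this base-base case, Morita invariance gives $\perf(X)\simeq \underline{A}$ and $\perf(Y)\simeq \underline{B}$ in $\Ho(\dgcat)$, and since $Y$ is $k$-flat the algebra $B$ is $k$-flat, so $\underline{A}\otimes^\bbL \underline{B}=\underline{A}\otimes \underline{B}=\underline{A\otimes_k B}$. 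On the other hand $X\times Y=\mathrm{spec}(A\otimes_k B)$, hence $\perf(X\times Y)\simeq \underline{A\otimes_k B}$ too, and $\boxtimes^\bbL$ identifies with this canonical equivalence. Applying the Morita-invariant functor $E$ delivers the desired isomorphism.

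The main obstacle is the naturality bookkeeping that glues the induction together: one must verify that for each Zariski open immersion $j:W\hookrightarrow X$ appearing at the vertices of the Mayer--Vietoris square, the restriction $j^\ast\otimes^\bbL \id_{\perf(Y)}$ on the left is intertwined by $\boxtimes^\bbL$ with the restriction $(j\times \id_Y)^\ast$ on the right. This is a standard compatibility of external tensor products with pullback along open immersions, but it is the step one should pin down explicitly before invoking the third-vertex argument in the inductive step.
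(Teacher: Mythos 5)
Your argument is correct and shares its skeleton with the paper's proof: both upgrade the statement to an isomorphism $E(\perf(X)\otimes^\bbL\perf(Y))\isoto E(\perf(X\times Y))$ in $\Ho(\cM)$ before applying $\Hom_{\Ho(\cM)}(\Sigma^n b,-)$, both reduce to the affine--affine case, and both conclude there via Morita invariance plus the $k$-flatness of $B$, which identifies $\underline{A}\otimes^\bbL\underline{B}$ with $\underline{A}\otimes\underline{B}\simeq\perf(\mathrm{spec}(A)\times\mathrm{spec}(B))$. Where you differ is the reduction to affines: the paper checks that the two bifunctors $E(\perf(-)\otimes^\bbL\perf(-))$ and $E(\perf(-\times-))$ satisfy Nisnevich (hence Zariski) descent in each variable via Theorem~\ref{thm:Nisnevich} and then invokes the reduction principle of Bondal and Van den Bergh \cite[Prop.~3.3.1]{BB} as a black box, whereas you inline that reduction as an explicit double induction on the size of an affine cover of $X$ (then of $Y$), using only the Mayer--Vietoris triangles of Corollary~\ref{cor:Zariski}. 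The two routes are essentially equivalent---your induction is, in effect, the proof of the reduction principle---so what you gain is self-containedness and the weaker input of Zariski rather than Nisnevich descent; what you lose is brevity. The one point you rightly flag, and which does need to be pinned down, is that the ``third-vertex'' step requires an honest morphism of distinguished triangles, i.e.\ compatibility with the connecting maps $\partial$ and not merely commutativity of the two visible squares. This is available here: for the cover $X=U\cup V$ the external product carries $\perf(X)_{X-U}\otimes^\bbL\perf(Y)$ into $\perf(X\times Y)_{(X-U)\times Y}$, so the comparison map is induced by a morphism of short exact sequences of dg categories exactly as in the proof of Theorem~\ref{thm:Nisnevich}; applying $E$ then produces a genuine morphism of triangles, and the five lemma legitimately yields the isomorphism at the vertex indexed by $X$.
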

\begin{proof}
The proof will consist on showing that the canonical maps
\begin{equation}\label{eq:can-isos}
E(-\boxtimes^\bbL -)_{Z,W} :E(\perf(Z) \otimes^\bbL \perf(W)) \too E(\perf(Z\times W))\,,
\end{equation}
parametrized by the pairs $(Z,W)$ of quasi-compact separated $k$-schemes with $W$ $k$-flat, are isomorphisms. The above isomorphism \eqref{eq:isom-can} will follow then from \eqref{eq:can-isos} (with $Z:=X$ and $W:=Y$) by applying the functor $\Hom_{\Ho(\cM)}(\Sigma^nb,-)$. Let us denote by $\mathrm{Sch}$ the category of quasi-compact separated $k$-schemes and by $\mathrm{Sch}_{\mathrm{flat}}$ the full subcategory of $k$-flat schemes. Note that we have two well-defined contravariant bifunctors
\begin{eqnarray*}
E(\perf(-)\otimes^\bbL \perf(-)) && E(\perf(-\times-))
\end{eqnarray*}
from $\mathrm{Sch}\times \mathrm{Sch}_{\mathrm{flat}}$ to $\Ho(\cM)$. Moreover, the above canonical maps \eqref{eq:can-isos} give rise to a natural transformation of bifunctors
\begin{equation}\label{eq:natural}
E(\perf(-)\otimes^\bbL \perf(-)) \Rightarrow E(\perf(-\times-))\,.
\end{equation}
Our goal is then to show that \eqref{eq:natural} is an isomorphism when evaluated at any pair $(Z,W)\in \mathrm{Sch}\times \mathrm{Sch}_{\mathrm{flat}}$. Let us start by fixing $W$. Thanks to Theorem~\ref{thm:Nisnevich} (applied to $\cA=\perf(W)$) one observes that the functor $E(\perf(-)\otimes^\bbL \perf(W))$ satisfies Nisnevich descent and hence by Corollary~\ref{cor:Zariski} Zariski descent. In what concerns $E(\perf(-\times W))$ note first that by applying the functor $-\times W$ to \eqref{eq:distinguished} one still obtains a (distinguished) square of $k$-schemes. Therefore, Theorem \ref{thm:Nisnevich} (applied to $\cA=\underline{k}$) allows us to conclude that $E(\perf(-\times W))$ satisfies also Nisnevich descent. 

Now, by the reduction principle of Bondal and Van den Bergh (see \cite[Prop.~3.3.1]{BB}) the above natural transformation \eqref{eq:natural} is an isomorphism when evaluated at the pairs $(Z,W)$, with $W$ fixed, if and only if it is an isomorphism when evaluated at the pairs $(\mathrm{spec}(C),W)$, with $C$ a commutative $k$-algebra. By fixing $Z$ and making the same argument one concludes also from the reduction principle that \eqref{eq:natural} is an isomorphism when evaluated at the pairs $(Z,W)$, with $Z$ fixed, if and only if it is an isomorphism when evaluated at the pairs $(Z,\mathrm{spec}(D))$, with $D$ a $k$-flat commutative $k$-algebra. In conclusion it suffices to show that \eqref{eq:natural} is an isomorphism when evaluated at the pairs $(\mathrm{spec}(C),\mathrm{spec}(D))$. Note that in this particular case we have the following canonical Morita equivalences
\begin{eqnarray*}
\perf(\mathrm{spec}(C))\simeq \underline{C} &\perf(\mathrm{spec}(D))\simeq \underline{D} & \perf(\mathrm{spec}(C)\times \mathrm{spec}(D))\simeq \underline{C}\otimes \underline{D}\,.
\end{eqnarray*}
Moreover, since the $k$-algebra $D$ is $k$-flat, the derived tensor product $\underline{C}\otimes^\bbL \underline{D}$ agrees with the classical one $\underline{C}\otimes \underline{D}$. By applying the functor $E$ to this latter isomorphism one obtains the evaluation $E(\underline{C}\otimes^\bbL \underline{D})\simeq E(\underline{C}\otimes\underline{D})$ of the above natural transformation \eqref{eq:natural} at the pair $(\mathrm{spec}(C),\mathrm{spec}(D))$. This concludes the proof of Proposition~\ref{prop:schemes-monoidal}.
\end{proof}

\begin{remark}\label{rk:very-new}
Given a quasi-compact separated $k$-scheme $X$, let 
\begin{eqnarray*}
X[x]:=X \times \bbA^1 & X[1/x]:= X \times \mathrm{spec}(k[1/x]) & X[x,1/x]:= X \times \mathrm{spec}(k[x,1/x])\,.
\end{eqnarray*}
Making use of Proposition~\ref{prop:schemes-monoidal} and the $k$-flatness of $k[x]$, $k[1/x]$ and $k[x,1/x]$, one observes that Theorem~\ref{thm:main22} applied to $\cA=\perf(X)$ reduces to the following exact sequence of abelian groups
$$
0 \to E_n^b(X) \stackrel{\Delta}{\to} E_n^b(X[x]) \oplus E_n^b(X[1/x]) \stackrel{\pm}{\to} E^b_n(X[x,1/x]) \stackrel{\partial_n}{\to} E_{n-1}^b(X) \to 0 \,.
$$
\end{remark}

\end{document}